\newtheorem{theorem}{Theorem}[section]
\newtheorem{proposition}[theorem]{Proposition}
\newtheorem{thm}{Theorem}[section]
\newtheorem{remarks}[theorem]{Remarks}
\newtheorem{prop}[thm]{Proposition}
\newcommand{\circo}{\accentset{\circ}}
\providecommand{\abs}[1]{\lvert#1\rvert}
\DeclareMathOperator{\tr}{tr}
\newcommand{\s}{\gamma}
\newcommand{\p}{\partial}
\newcommand{\e}{\epsilon}
\newcommand{\al}{\alpha}
\newcommand{\nablap}{\accentset{\bot}{\nabla}}
\newcommand{\Rp}{Rm^{\perp}}
\newcommand{\Acirc}{\accentset{\circ}{A}}
\newcommand{\mc}{\mathcal}
\newcommand{\R}{\mathbb{R}}
\newcommand{\3}{1}
\newcommand{\4}{2}
\newcommand{\Kp}{K^{\perp}}
\begin{document}

\title{Surfaces of Co-Dimension Two Pinched by Normal Curvature}



\author{Charles Baker}
\author{Huy The Nguyen}
\address{
The University of Queensland,
St Lucia, Qld 4072, Australia}
\email{Charles.Baker@uqconnect.edu.au}
\email{huy.nguyen@maths.uq.edu.au}


\begin{abstract}
We prove that codimension two surfaces satisfying a nonlinear curvature condition depending on normal curvature are smoothly deformed by mean curvature flow to round points.
\end{abstract}

\maketitle
\section{Introduction}

We consider two dimensional surfaces of codimension two immersed in Euclidean four-space, which includes, for example, the Clifford torus when viewed as a submanifold of $\mathbb{R}^4$. The main theorem we present asserts that surfaces satisfying a curvature pinching depending on normal curvature are deformed by the mean curvature flow to round points. In contrast to hypersurfaces, very little progress has been made on mean curvature flow in high codimension owing to the nontrivial structure of the normal bundle. The best result to date is due to Andrews and Baker \cite{Andrews2010}, where it is shown that, for suitable values of a constant $k$ depending on dimension but not codimension, submanifolds satisfying the pinching condition $|A|^2 \leq k |H|^2$ evolve under the mean curvature flow to round points, which can be considered a high codimension analogue of Huisken's seminal result on mean curvature flow of hypersurfaces \cite{Huisken1984}. In this paper we show for the first time that inclusion of normal curvature in the pinching cone provides improved geometric estimates, expanding the class of surfaces known to be diffeomorphic to round spheres.

The submanifold estimates are much more difficult than their hypersurface counterparts, being complicated by the presence of normal curvature. The main theorem of \cite{Andrews2010} is optimal for submanifolds of dimension four and greater (independent of the codimension), where the tori $\mathcal{S}^{n-1}(\epsilon) \times \mathcal{S}(1) \subset \mathbb{R}^{n} \times \mathbb{R}^{2}$ are obstructions to improving the pinching constant beyond $1/(n-1)$. The theorem is suboptimal in dimensions two and three, with pinching constant $k=4/(3n)$, because of unfavourable reaction terms. With the inclusion of normal curvature, the new pinching condition turns out to be optimal for the reaction terms, but the gradient terms still obstruct the attainment of optimal pinching, similar to the flow of hypersurfaces in a spherical background \cite{Huisken1987}. The main result we obtain in this article is as follows:
\begin{thm}\label{thm:mainThm}
Suppose $\Sigma_0=F_0(\Sigma^2)$ is a closed surface smoothly immersed in $\mathbb{R}^{4}$.  If $\Sigma_0$ satisfies $\abs{ H }_{ \text{min} } > 0$ and $\abs{A}^2 +2\gamma|K^{\bot}| \leq k\abs{H}^2$, where $ \gamma = 1 - \nicefrac43 k$ and $k \leq \nicefrac {29}{40}$,
then the mean curvature flow of $\Sigma_0$ has a unique smooth solution $\Sigma_t$ on a finite maximal time interval $t \in [0,T)$. There exists a sequence of rescaled mean curvature flows $F_j : \Sigma^2 \times I_j \rightarrow \mathbb{R}^{4}$ containing a subsequence of mean curvature flows (also indexed by $j$) that converges to a limit mean curvature flow $F_{\infty} : \Sigma^2_{\infty} \times (-\infty, 0] \rightarrow \mathbb{R}^{4}$ on compact sets of $\mathbb{R}^{4} \times \mathbb{R}$ as $j \rightarrow \infty$. Moreover, the limit mean curvature flow is a shrinking sphere.
\end{thm}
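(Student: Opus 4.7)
The plan is to follow the standard scheme established by Huisken for hypersurfaces and extended by Andrews--Baker to higher codimension, modified to accommodate the normal curvature term $|K^\perp|$ in the pinching quantity. The argument divides into four stages: preservation of the pinching cone under the flow, an iterative improvement to a roundness estimate, higher order gradient estimates, and a blow-up argument forcing the limit flow to be a shrinking sphere.

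First I would compute the evolution equations for $|A|^2$, $|H|^2$ and $|K^\perp|$. Since $K^\perp$ is a scalar (up to sign) in codimension two, and $|K^\perp|$ fails to be smooth where $K^\perp$ vanishes, one either works with $|K^\perp|^2$ throughout or handles the locus $\{K^\perp=0\}$ via an approximation argument. The central computation is to show that $Q := |A|^2 + 2\gamma|K^\perp| - k|H|^2$ satisfies $\partial_t Q \leq \Delta Q + \langle V,\nabla Q\rangle + R$ with $R\leq 0$ whenever $Q=0$. The specific coupling $\gamma = 1-\tfrac{4k}{3}$ is designed so that the reaction terms from $|A|^2$, decomposing in Andrews--Baker fashion as $|A\cdot A|^2 + |A\wedge A|^2$, combine with the corresponding reaction in $|K^\perp|$ to vanish on the pinching boundary; this is the codimension-two reaction-term optimisation. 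The maximum principle then gives preservation of the cone, and simultaneously that $|H|_{\min}>0$ is preserved, forcing a finite maximal time $T$.

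Second, I would run a Stampacchia iteration on a scale-invariant quantity of the form $f_{\sigma,\eta} := (|A|^2 + 2\gamma|K^\perp| - k_1|H|^2)_+^{\sigma}\,|H|^{\eta-2\sigma}$ with $k_1<k$, using the evolution equation for $|H|$ together with a Michael--Simon Sobolev inequality and iterated $L^p$ bounds. This yields the pointwise roundness estimate $|\Acirc|^2 + 2\gamma|K^\perp| \leq C|H|^{2-\delta}$ for some $\delta>0$, so that in regions of large mean curvature the surface is nearly umbilical in both the tangent and normal bundles. Higher-order estimates $|\nabla^m A|^2 \leq C_m|H|^{2m+2-\delta_m}$ then follow by induction on $m$ from the maximum principle, with normal-bundle derivatives handled through the Codazzi and Ricci identities. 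These estimates, combined with the lower bound on $|H|$, produce uniformly bounded rescaled flows about a sequence of points realising the supremum of curvature as $t\to T$, so Arzel\`a--Ascoli extracts the subsequential limit $F_\infty$ on $(-\infty,0]$. The roundness estimate passes to the limit to force $\Acirc_\infty\equiv 0$ and $K^\perp_\infty\equiv 0$, identifying $\Sigma_\infty$ with a shrinking round sphere in an affine three-plane.

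The main obstacle is the preservation step. The normal curvature contributes reaction terms of schematic type $\langle R^\perp\cdot A, A\rangle$ with indefinite sign, and absorbing these into the reaction from $|A|^2$ requires the low-dimensional structure in a precise way: that $A$ consists of two symmetric $2\times 2$ endomorphisms, that the normal bundle has rank two, and that $K^\perp$ is essentially the commutator norm of the two shape operators. The careful algebraic inequality interlocking these ingredients, tuned by $\gamma=1-\tfrac{4k}{3}$, is the crux of the proof. The gradient terms arising in the evolution of $|K^\perp|$ then degrade the reaction-optimal bound and force the restriction $k\leq \tfrac{29}{40}$, paralleling the obstruction seen in the flow of hypersurfaces in a spherical background.
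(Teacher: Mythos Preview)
Your overall architecture matches the paper's---preserve the pinching, derive the roundness estimate $|\Acirc|^2 + 2\gamma|K^\perp| \leq C|H|^{2-\delta}$ by Stampacchia iteration, then blow up and invoke the Codazzi Theorem---but you have the mechanism of the two constraints inverted. In the paper the coupling $\gamma = 1 - \tfrac{4k}{3}$ is dictated by the \emph{gradient} terms, not the reaction terms: the evolution of $|K^\perp|$ contributes a gradient piece $-4\gamma\tfrac{K^\perp}{|K^\perp|}\nabla_{\!evol}K^\perp$, and the new pointwise inequality $|\nabla A|^2 \geq 2\,\nabla_{\!evol}K^\perp$ (Proposition~\ref{prop_grad}) combined with $|\nabla A|^2 \geq \tfrac34|\nabla H|^2$ makes the total gradient contribution non-positive exactly when $\gamma \leq 1 - \tfrac{4k}{3}$. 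The restriction $k \leq \tfrac{29}{40}$ then comes from the \emph{reaction} terms: after substituting the boundary condition $\mc Q = 0$ with this $\gamma$ already fixed, a residual $|K^\perp|^2$ term of the wrong sign survives and must be absorbed by the other negative quadratic pieces, which succeeds only numerically for $k \leq \tfrac{29}{40}$. Your attribution---coupling from reaction optimisation, restriction from gradients---is backwards, and would send you looking for the wrong cancellation when you try to execute the computation.

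A second gap concerns the iteration step. The $L^p$ decay that drives the Stampacchia argument does not come from the Michael--Simon inequality alone but from a Poincar\'e-type inequality obtained by integrating the contracted Simons identity against $f_\sigma^{p-1}$. The paper's specific contribution here is the exact evaluation $Z = 2K|\Acirc|^2 - 2|K^\perp|^2$ of the Simons nonlinearity for surfaces in $\mathbb{R}^4$ (Proposition~\ref{prop:newSimonsId}), which under the pinching yields $Z \geq \epsilon_Z(|\Acirc|^2 + 2\gamma|K^\perp|)|H|^2$ and is precisely what absorbs the bad term $2\sigma|A|^2 f_\sigma$ in the evolution of $f_\sigma = (|\Acirc|^2 + 2\gamma|K^\perp|)/|H|^{2(1-\sigma)}$. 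Your proposed quantity $f_{\sigma,\eta}$ and appeal to Michael--Simon skip this step entirely. Finally, the paper does not prove higher-order gradient estimates by induction; it invokes a compactness theorem for mean curvature flow directly, so that portion of your outline is superfluous.
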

This theorem improves the pinching constant of \cite{Andrews2010} from $\nicefrac23$ to $\nicefrac34 - \nicefrac1{40}$, which is, similar to the hypersurface theory, almost the best constant thought to be achievable with the mean curvature flow. The inclusion of normal curvature in the pinching condition cancels the unfavourable reaction terms encountered in \cite{Andrews2010}, however, the gradient of the normal curvature prohibits pushing the pinching constant all the way to $\nicefrac34$. We conjecture that the Clifford torus viewed as a two-surface of codimension two in $\mathbb{R}^4$ is the true obstruction to the theorem, corresponding to an optimal pinching constant of $k=1$. The Clifford torus is still intrinsically flat but no longer minimal in $\mathbb{R}^4$ and satisfies $|A|^2 = |H|^2$.

We take this opportunity to announce another new result of independent interest, discovered in the course of estimating the nonlinearity in the Simons identity (see Proposition \ref{prop:newSimonsId}). Obtaining a positive lower bound on the nonlinearity in Simons' identity is a crucial step in the integral estimates used to prove convergence to a round point. In the case of two-surfaces of codimension two (in this case immersed in a Euclidean background), it is possible to compute the nonlinearity exactly, with the result that $Z =  2K|\Acirc|^2 - 2|K^{\perp}|^2$. The Simons identity plays a key role in a series of classification results, initiated in a famous paper by Chern, do Carmo and Kobayashi, \cite{ChernCarmoKobayashi1970}, where it is proved that if a $n$-dimensional submanifold of a $(n+p)$-dimensional sphere satisfies $|A|^2 \leq n/(1-1/p)$, then the submanifold is totally geodesic, or if the equality holds identically, then it is the Clifford torus or Veronese surface. With our refined understanding of the Simons identity nonlinearity we are able to provide a new classification result depending not on the length of the second fundamental form, but rather on a pointwise pinching of the intrinsic and normal curvatures.

\begin{theorem}
Suppose a two surface $\Sigma^2$ minimally immersed in $\mathcal{S}^4$ satisfies $|K^\perp|^2 \leq K | \Acirc|^2$. Then either 
\begin{enumerate} [i)]
\item $|A| ^ 2 \equiv 0 $ and the surface is a geodesic sphere; or

\item $|A| ^ 2 \not \equiv 0 $, in which case either
\begin{enumerate}
\item $|K^{\perp}| = 0$, and the surface is the Clifford torus, or

\item $K^{\perp} \neq 0$ and it is the Veronese surface.
\end{enumerate}
\end{enumerate}

\end{theorem}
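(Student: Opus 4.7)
The plan is to combine the exact evaluation of the Simons nonlinearity provided by Proposition~\ref{prop:newSimonsId} with the ambient curvature term furnished by $\mathcal{S}^4$ to show that the pinching forces parallelism of the second fundamental form, and then to appeal to the classical classification of parallel minimal surfaces in $\mathcal{S}^4$ to read off the three cases.

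First I would write down Simons' identity for a minimal two-surface $\Sigma^2 \hookrightarrow \mathcal{S}^4$. Because $H \equiv 0$ we have $\mathring{A} = A$, and the Gauss equation reduces to $K = 1 - \tfrac{1}{2}|A|^2$. The standard Simons identity for minimal submanifolds of $\mathcal{S}^{n+p}$ in the case $n = p = 2$ reads
\begin{equation*}
\tfrac{1}{2}\Delta|A|^2 = |\nabla A|^2 + 2|A|^2 - (S_1 + S_2),
\end{equation*}
where $S_1 + S_2 \geq 0$ denotes the tangential nonlinearity. The content of Proposition~\ref{prop:newSimonsId} is that this nonlinearity can be evaluated in closed form as $S_1 + S_2 = |A|^4 + 2|K^\perp|^2$. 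Substituting the Gauss relation in the form $|A|^4 = 2|A|^2 - 2K|\mathring{A}|^2$ then collapses the identity to
\begin{equation*}
\tfrac{1}{2}\Delta|A|^2 = |\nabla A|^2 + Z, \qquad Z := 2K|\mathring{A}|^2 - 2|K^\perp|^2,
\end{equation*}
and the hypothesis is precisely $Z \geq 0$.

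Next I would integrate over the closed surface $\Sigma$. Since $\int_\Sigma \Delta|A|^2 = 0$, both nonnegative summands must vanish pointwise: $\nabla A \equiv 0$, and $|K^\perp|^2 = K|\mathring{A}|^2$. Parallelism of $A$, together with the Gauss and Ricci equations, forces $|A|^2$, $K$ and $|K^\perp|$ to be constant. The classification of parallel minimal surfaces in $\mathcal{S}^4$ then leaves exactly three possibilities: the totally geodesic two-sphere ($A \equiv 0$, case (i)); the Clifford torus, for which $|K^\perp| \equiv 0$ and a standard reduction-of-codimension argument for flat normal bundle places $\Sigma$ inside a totally geodesic $\mathcal{S}^3 \subset \mathcal{S}^4$; or the Veronese surface, substantial in $\mathcal{S}^4$ and with $|K^\perp| > 0$, uniquely characterised among parallel minimal surfaces in $\mathcal{S}^4$ via the Chern--do Carmo--Kobayashi rigidity \cite{ChernCarmoKobayashi1970}. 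The case split on $|K^\perp|$ in the statement is precisely the one produced by this classification.

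The main obstacle is Step~1: securing the collapsed identity $\tfrac{1}{2}\Delta|A|^2 = |\nabla A|^2 + Z$. This requires the explicit pointwise evaluation of the Simons nonlinearity for two-surfaces of codimension two and its correct interaction with the ambient $2|A|^2$ contribution from $\mathcal{S}^4$ through the Gauss equation; this is exactly where the refined Simons identity of Proposition~\ref{prop:newSimonsId} does the heavy lifting. Once that identity is established, the rigidity follows from a single integration by parts, and the case-by-case identification of $\Sigma$ is an appeal to the classical classification of parallel submanifolds of space forms.
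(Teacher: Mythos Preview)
The paper does not actually prove this theorem here: it announces the result and explicitly defers the proof to a sequel on mean curvature flow in spheres, saying only that ``the argument involves careful examination of the curvature terms and an application of the strong maximum principle.'' So there is no detailed proof in the paper to compare against line by line.

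That said, your argument is correct. The key computation---that for a minimal two-surface in $\mathcal{S}^4$ the contracted Simons identity collapses to $\tfrac{1}{2}\Delta|A|^2 = |\nabla A|^2 + 2K|\Acirc|^2 - 2|K^\perp|^2$ with $K$ the intrinsic curvature in $\mathcal{S}^4$---is exactly what Proposition~\ref{prop:newSimonsId} supplies once you absorb the ambient $2|A|^2$ term via the spherical Gauss equation $K = 1 - \tfrac{1}{2}|A|^2$. Integrating then forces $\nabla A \equiv 0$, and the classification of minimal surfaces with parallel second fundamental form in $\mathcal{S}^4$ finishes the job.

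Your route differs from what the paper signals: you give a static elliptic argument (integrate a subharmonic quantity on a closed manifold), whereas the paper points toward a parabolic approach via mean curvature flow in spheres and the strong maximum principle. Note that your integration step could equally well be replaced by the elliptic strong maximum principle applied to the subharmonic function $|A|^2$, which is perhaps closer in spirit to the hint. The advantage of your approach is that it is entirely self-contained and elementary once Proposition~\ref{prop:newSimonsId} is available; the flow approach presumably has the advantage of fitting into the broader machinery of the sequel. One small remark: the final appeal to Chern--do~Carmo--Kobayashi is slightly imprecise, since their rigidity is stated for the pinching $|A|^2 \leq n/(2 - 1/p)$ rather than for parallel $A$ directly; you should cite the classification of parallel minimal surfaces in space forms for the case split, though the conclusion is the same.
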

In a sequel to this paper we investigate submanifolds immersed in a spherical background moving by the mean curvature flow, where a proof of the above theorem more naturally resides. The argument involves careful examination of the curvature terms and an application of the strong maximum principle.

\section{Notation and preliminary results}
We adhere to the notation of \cite{Andrews2010} and in particular use the canonical space-time connections introduced in that paper. A fundamental ingredient in the derivation of the evolution equations is Simons' identity:
\begin{equation}\label{eqn:SimonsId}
\Delta h_{ij}=\nabla_i\nabla_jH+H\cdot h_{ip}h_{pj}-h_{ij}\cdot h_{pq}h_{pq}+2h_{jq}\cdot h_{ip}h_{pq}
-h_{iq}\cdot h_{qp}h_{pj}-h_{jq}\cdot h_{qp}h_{pi}.
\end{equation}
The timelike Codazzi equation combined with Simons' identity produces the evolution equation for the second fundamental form:
\begin{equation}\label{eqn:evolA}
\nabla_{\partial_t}h_{ij} = \Delta h_{ij}+h_{ij}\cdot h_{pq}h_{pq}+h_{iq}\cdot h_{qp}h_{pj}
+h_{jq}\cdot h_{qp}h_{pi}-2h_{ip}\cdot h_{jq} h_{pq}.
\end{equation}
The evolution equation for the mean curvature vector is found by taking the trace with $g_{ij}$:
\begin{equation}
\nabla_{\partial_t}H=\Delta H + H\cdot h_{pq}h_{pq}.
\end{equation}
The evolution equations of the squared lengths of the second fundamental form and the mean curvature vector are
\begin{align}
	\frac{ \p }{ \p t} \abs{A}^2 &= \Delta\abs{A}^2 - 2\abs{\nabla A}^2 + 2 \sum_{\alpha, \beta} \bigg( \sum_{i,j} h_{ij\alpha}h_{ij\beta} \bigg)^2 +2 \sum_{i,j,\alpha,\beta} \bigg( \sum_p h_{ip\alpha}h_{jp\beta} - h_{jp\alpha}h_{ip\beta} \bigg)^2 \label{eqn:A2}\\
	\frac{\partial}{\partial t}\abs{H}^2 &= \Delta\abs{H}^2 - 2\abs{\nablap H}^2 + 2\sum_{i,j} \bigg( \sum_{\alpha} H_{\alpha}h_{ij\alpha}\bigg)^2. \label{eqn:H2}
\end{align}
The last term in \eqref{eqn:A2} is the squared length of the normal curvature, which we denote by $\abs{ \Rp }^2$.  For convenience, we label the reaction terms of the above evolution equations by
\begin{gather*}
	R_1 = \sum_{\alpha, \beta} \bigg(\!\sum_{i,j} h_{ij\alpha}h_{ij\beta}\!\bigg)^2 + \abs{\Rp}^2 \\
	R_2 = \sum_{i,j}\!\bigg(\!\sum_{\alpha} H_{\alpha}h_{ij\alpha}\bigg)^2.
\end{gather*}

The following existence theorem holds for the mean curvature flow of $\Sigma_0$ under the conditions of Theorem \ref{thm:mainThm}:
\begin{thm}\label{thm:longTimeExistence}
The mean curvature flow of $\Sigma_0$ exists on a finite maximal time interval $0 \leq t < T < \infty$.  Moreover, $\max_{\Sigma_t} \abs{A}^2 \rightarrow \infty$ as $t \rightarrow T$.
\end{thm}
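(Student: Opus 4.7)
The proof naturally splits into three ingredients, the first two of which are completely standard. Short-time existence and uniqueness of the mean curvature flow starting from the smooth closed immersion $F_0$ follows from Hamilton's DeTurck trick to resolve the degeneracy of the evolution, producing a unique maximal smooth solution $F : \Sigma^2 \times [0,T) \to \mathbb{R}^4$ for some $T > 0$.

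To establish finiteness of $T$, I would exploit compactness of $\Sigma_0$ via the evolution of the squared position vector, rather than rely on the pinching hypothesis. Using $\Delta F = H$ together with the flow equation $\partial_t F = H$, a direct computation gives
\begin{equation*}
\bigg( \frac{\partial}{\partial t} - \Delta \bigg) \abs{F}^2 = -2n = -4,
\end{equation*}
and the parabolic maximum principle then yields $\max_{\Sigma_t} \abs{F}^2 \leq R_0^2 - 4t$, where $R_0 = \max_{\Sigma_0} \abs{F_0}$ is finite since $\Sigma_0$ is closed. Because $\abs{F}^2 \geq 0$, a smooth immersion cannot persist past $t = R_0^2/4$, forcing $T \leq R_0^2/4 < \infty$. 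It is worth emphasising that this step does not invoke the curvature pinching or the bound $\abs{H}_{\text{min}} > 0$.

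For the curvature blow-up claim, I would argue by contradiction. Supposing $\sup_{t \in [0,T)} \max_{\Sigma_t} \abs{A}^2 < \infty$, the Bernstein-type higher-order derivative estimates for mean curvature flow (developed by Ecker--Huisken along the lines of Shi's technique from Ricci flow) give uniform bounds $\norm{\nabla^m A}_{L^\infty} \leq C_m$ on $[0,T)$ for every $m \geq 0$. Combined with $\partial_t F = H$, these propagate to uniform $C^m$-bounds on the immersions $F_t$, so $F_t$ converges smoothly as $t \to T$ to a smooth limiting immersion $F_T : \Sigma^2 \to \mathbb{R}^4$. Short-time existence applied at $F_T$ then extends the flow past $T$, contradicting the maximality of $T$; therefore $\max_{\Sigma_t} \abs{A}^2 \to \infty$ as $t \to T$. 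I expect this last step to be the main technical obstacle on paper, but it is by now a classical tool that can be invoked rather than reproved, so no substantial new work is needed.
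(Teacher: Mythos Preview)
Your proof is correct and follows essentially the same route as the paper: the paper proves finiteness of $T$ via the evolution $\partial_t|F|^2 = \Delta|F|^2 - 2n$ and the maximum principle exactly as you do, and simply cites \cite{Andrews2010} for the curvature blow-up, where the argument is precisely the standard bounded-curvature/smooth-extension contradiction you spelled out. The only cosmetic difference is that you make explicit the short-time existence step and the higher-order derivative estimates, which the paper leaves implicit or delegates to the reference.
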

The proof that the maximal time of existence is finite follows easily from the evolution equation for the position vector $F$: $\frac{\partial}{\partial t}\abs{F}^2 = \Delta\abs{F}^2-2n$.  The maximum principle implies $\abs{F(p,t)}^2\leq R^2-2nt$ and thus $T\leq \nicefrac{R^2}{2n}$, where $R=\max\left\{\abs{F_0(p)}:\ p\in\Sigma\right\}$. The proof of the second part of the theorem can be found in \cite{Andrews2010}.

\section{Evolution of normal curvature}
In this section we compute the evolution equation for the normal curvature. The normal curvature tensor in local orthonormal frames for the tangent $\{ e _i  : i = 1,2 \}$ and normal $\{ \nu _ \alpha : \alpha = 1,2 \}$ bundles is given by 
\begin{align} \label{evol_normal}
R ^ {\perp} _{ ij\alpha\beta} = h _{ i p \alpha } h _{ jp \beta } - h _ {jp \alpha } h _{ ip \beta}.
\end{align}
We will often compute in a local orthonormal normal frame $\{ \nu _ \alpha : \alpha = 1,2 \}$ where $\nu_1 = \nicefrac { H} { | H| }$. As the normal bundle is two dimensional $ \nu_2 $ is then determined by $ \nu _ 1 $ up to sign. With this choice of frame the second fundamental form becomes
\begin{align}\label{eqn_traceless} 
\left \{ 
\begin{array}{cc} 
\circo A _ 1 = A _ 1 - \frac {|H|} {n} Id \\
\circo A_ 2 = A_2
\end{array}
\right.
\end{align}
and 
\begin{align*}
\left \{ 
\begin{array}{cc}
\tr A_1 = | H| \\
\tr A_2 = 0.
\end{array}
\right.
\end{align*}
It is also always possible to choose the tangent frame $\{ e _i  : i = 1,2 \}$ to diagonalise $A_1$. We often refer to the orthonormal frame $ \{ e_1, e_2, e_3, e_4\} = \{ e_1, e_2, \nu_1, \nu _2\}$, where $\{e_i\}$ diagonalises $A_1$ and $\nu_1 = H/|H|$, as the `special orthonormal frame'.
Codimension two surfaces have four independent components of the second fundamental form, which makes it still tractable to work with individual components, similar to the role of principal curvatures in hypersurface theory. Working in the special orthonormal frame, we often find it convenient to represent the second fundamental form by
\begin{align}\label{eqn_split}
h_{ ij } = \left [
\begin{array}{cc}
 \frac { |H|} {2} + a  & 0 \\
 0 & \frac { |H| } {2} - a 
\end{array}
  \right]\nu_1
  + \left[ 
  \begin{array}{cc} 
  b & c \\
  c & -  b
  \end{array}
  \right]\nu_2,
\end{align}
so that $h_{111} = |H|/2 +a$, $h_{221} = |H|/2-a$, $h_{112} = b$, $h_{122} = c$ and so on. Note that $|\Acirc|^2 = 2a^2 + 2b^2 +2c^2$.

Just as a surface has only one sectional curvature $K$, a codimension two surface also has only one normal curvature, which we denote by $ K ^ \perp$. In the special orthonormal frame the normal curvature is
\begin{equation}\label{eqn:normalCurv}
\begin{split}
K^\perp = R^{\perp}_{1234} & = \sum_{p}\left(h_{1p\3 }h_{2p\4 } - h_{2p\3 }h_{1p\4 }\right)\\
& = h_{11\3 }h_{21\4 } - h_{21\3 }h_{11\4 } +h_{12\3 }h_{22\4 } - h_{22\3 }h_{12\4 }\\
& = (h_{11\3 } - h_{22\3 })h_{12\4 }+ h_{12\3 }( h _{ 22\4  } - h_{ 11\4  }) \\
& = 2 a c.
\end{split}
\end{equation}
Note also that $| \Rp | ^ 2 = 16 a ^ 2 c ^ 2 $. Differentiating \eqref{evol_normal} and using equation \eqref{eqn:evolA} we have
\begin{align*}
\frac{\partial}{\partial t} R ^ { \perp} _{ ij\alpha \beta } &= \Delta R ^ { \perp} _{ ij \alpha \beta } - 2 \sum _{p,r }\left ( \nabla_q h _{ ip\alpha} \nabla _ q h _{ jp\beta} - \nabla _ q h _{ jp \alpha } \nabla _ q h _{ i p \beta }\right ) \\
 &\quad +\sum_{ p } \left (\frac{ d}{ dt} h _{ i p \alpha} h _{ j p \beta } + h _{ i p \alpha} \frac{ d } {dt } h _{ j p \beta } - \frac { d} { dt } h _{ j p \alpha } h _ {i p \beta } - h _{ jp \alpha} \frac{ d}{ dt }h_{ i p \beta }\right)
\end{align*}
or 
\begin{equation}\label{eqn:evolNormalCurv}
\begin{split}
\frac{\partial}{\partial t}   R ^ { \perp} _{ ij\alpha \beta } & = \Delta R ^ { \perp} _{ ij \alpha \beta } - 2 \sum _{p,r }\left ( \nabla_q h _{ ip\alpha} \nabla _ q h _{ jp\beta} - \nabla _ q h _{ jp \alpha } \nabla _ q h _{ i p \beta }\right ) \\
&\quad + \sum (h _{ ip \gamma } \cdot h_{rq\gamma} h _{ rq\al } + h _{ i q \gamma } \cdot h _{ q r \gamma} h _{ rp \al}+ h _{ p q\gamma} \cdot h_{ qr\gamma } h _{ ri \al} - 2 h _{ ir \gamma} \cdot h _{ pq\gamma } h _{ rq\al })h_{jp\beta}\\
&\quad+ \sum h_{ip\alpha}(h _{ jp \gamma } \cdot h_{rq\gamma} h _{ rq\beta } + h _{ j q \gamma } \cdot h _{ q r \gamma} h _{ rp \beta}+ h _{ p q\gamma} \cdot h_{ qr\gamma } h _{ rj \beta} - 2 h _{ jr \gamma} \cdot h _{ pq\gamma } h _{ rq\beta }) \\
&\quad-\sum ( h _{ jp \gamma } \cdot h_{rq\gamma} h _{ rq\al } + h _{ j q \gamma } \cdot h _{ q r \gamma} h _{ rp \al}+ h _{ p q\gamma} \cdot h_{ qr\gamma } h _{ rj \al} - 2 h _{ jr \gamma} \cdot h _{ pq\gamma } h _{ rq\al }) h_{ ip \beta}\\
&\quad - \sum h_{jp\al}(h _{ ip \gamma } \cdot h_{rq\gamma} h _{ rq\beta } + h _{ i q \gamma } \cdot h _{ q r \gamma} h _{ rp \beta}+ h _{ p q\gamma} \cdot h_{ qr\gamma } h _{ ri \beta} - 2 h _{ ir \gamma} \cdot h _{ pq\gamma } h _{ rq\beta } ).
\end{split}
\end{equation}
Using the special orthonormal frame the nonlinearity for codimension two surfaces simplifies to
\begin{align*}
 \frac{ d}{ dt} K ^ \perp &= 4 ac \left(  \left ( \frac{ | H|}{2} - a \right)^ 2 - \left (\frac { |H|} { 2 } + a  \right) \left ( \frac { | H|} { 2 } - a \right) + 2 b ^ 2 + 3 c ^ 2 +\left ( \frac { |H|}{ 2 }  + a \right) ^ 2\right) \\
  &= K^{\perp}\left( |A|^2 + 2|\Acirc|^2 - 2b^2 \right). 
\end{align*}
For notational convenience we set
\[ \nabla_{\!\!evol}K^{\perp} := \sum _{p,r }\left ( \nabla_q h _{ ip\alpha} \nabla _ q h _{ jp\beta} - \nabla _ q h _{ jp \alpha } \nabla _ q h _{ i p \beta }\right ) \]
and
\[ R_3 := K^{\perp}\left( |A|^2 + 2|\Acirc|^2 - 2b^2 \right). \]
Substituting the simplifed nonlinearity into \eqref{eqn:evolNormalCurv} we obtain the evolution equation for the normal curvature
\[ \frac{\p  }{ \p t } K^{\perp}= \Delta K^{\perp} - 2 \nabla_{\!\!evol}K^{\perp} + K^{\perp}\left( |A|^2 + 2|\Acirc|^2 - 2b^2 \right), \]
and a little more computation shows the length of the normal curvature evolves by
\[ \frac{\p  }{ \p t } |K^{\perp}|= \Delta |K^{\perp}| - 2 \frac{ K^{\perp} }{|K^{\perp}|  }\nabla_{\!\!evol}K^{\perp} + |K^{\perp}|\left( |A|^2 + 2|\Acirc|^2 - 2b^2 \right). \]
We remark that the complicated structure of the gradient terms prevents an application of the maximum principle to conclude flat normal normal bundle is preserved.

\section{Preservation of curvature pinching}\label{sec:preservation}
The first step towards Theorem \ref{thm:mainThm} is to show a certain quadratic curvature condition involving the normal curvature is preserved by the mean curvature flow. Note that due to the $\epsilon$ in the following proposition, as an automatic corollary we see that $|H| > 0$ is also preserved along the flow.
\begin{prop}\label{prop:pinching}
If a solution $ F: \Sigma \times [0,T) \rightarrow \R^{4}$ of MCF satisfies $ |A| ^ 2 + 2 \gamma | K^\perp| + \epsilon < k |H|^ 2 $
where $ \gamma = 1 -\nicefrac43 k $ and $ \nicefrac12 <k < \nicefrac{ 29 }{ 40 } $ then this remains true for all $ 0\leq t <  T$.
\end{prop}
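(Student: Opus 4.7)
Our plan is to prove preservation by applying the parabolic maximum principle to
\[ Q := |A|^2 + 2\gamma|K^{\perp}| - k|H|^2 + \epsilon. \]
Because $|K^{\perp}|$ fails to be smooth at its zero set, we either regularise with $|K^{\perp}|_\delta := \sqrt{(K^{\perp})^2 + \delta^2}$ and let $\delta \to 0^+$, or argue in the viscosity sense. Suppose for contradiction that there is a first time $t_0 \in (0,T)$ and a point $p_0 \in \Sigma$ at which $Q(p_0,t_0) = 0$. Then $\nabla Q = 0$, $\Delta Q \leq 0$ and $\partial_t Q \geq 0$ at $(p_0,t_0)$, and the goal is to derive a contradiction from the evolution equation. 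The case $K^{\perp}(p_0,t_0) = 0$ reduces to a hypersurface-style argument as in \cite{Andrews2010}, so we may assume $K^{\perp}(p_0,t_0) \neq 0$ and that $|K^{\perp}|$ is smooth near $(p_0,t_0)$.

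Combining \eqref{eqn:A2}, \eqref{eqn:H2} and the evolution equation for $|K^{\perp}|$ derived in the previous section, we obtain $\partial_t Q - \Delta Q = \mathcal{G} + \mathcal{R}$, with gradient and reaction contributions
\[ \mathcal{G} = -2|\nabla A|^2 + 2k|\nablap H|^2 - 4\gamma\frac{K^{\perp}}{|K^{\perp}|}\nabla_{\!\!evol}K^{\perp} \]
and
\[ \mathcal{R} = 2R_1 - 2kR_2 + 2\gamma|K^{\perp}|\bigl(|A|^2 + 2|\Acirc|^2 - 2b^2\bigr). \]
The plan is to verify $\mathcal{R} \leq 0$ and $\mathcal{G} \leq 0$ at $(p_0,t_0)$ separately, using the equality $|A|^2 + 2\gamma|K^{\perp}| = k|H|^2 - \epsilon$ and the first-order relation $\nabla Q = 0$.

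For $\mathcal{R}$, I would work in the special orthonormal frame \eqref{eqn_split}, in which $R_1$, $R_2$ and $|K^{\perp}| = 2|ac|$ all become explicit polynomials in $|H|, a, b, c$ and the pinching equality reads $2(a^2+b^2+c^2) + 4\gamma|ac| = (k - \tfrac{1}{2})|H|^2$. Substituting this identity to eliminate the $|H|^4$-order contribution reduces $\mathcal{R} \leq 0$ to an algebraic inequality in the dimensionless ratios $a/|H|, b/|H|, c/|H|$ on a compact set. The particular choice $\gamma = 1 - \tfrac{4}{3}k$ is precisely what is needed to cancel the worst-behaved reaction coming from the $|K^{\perp}|^2$ term in $R_1$, the obstruction encountered in \cite{Andrews2010}; after the cancellation the remaining inequality closes by AM--GM type estimates and I expect it to remain valid throughout the full range $k \leq \tfrac{3}{4}$.

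The main obstacle, and what pins down the actual bound $k \leq \tfrac{29}{40}$, is controlling the gradient term $\mathcal{G}$. The cross term $-4\gamma\frac{K^{\perp}}{|K^{\perp}|}\nabla_{\!\!evol}K^{\perp}$ is a bilinear form in $\nabla h$ whose sign is not evident, and the Kato inequality for surfaces, $|\nabla A|^2 \geq \tfrac{3}{4}|\nablap H|^2$, is only just strong enough to dominate $2k|\nablap H|^2$ when $k < \tfrac{3}{4}$. The plan is to use the first-order constraint $\nabla Q = 0$ to express $\nabla|K^{\perp}|$ in terms of $\nabla|A|^2$ and $\nabla|H|^2$, then expand $\mathcal{G}$ via the Codazzi identities as a quadratic form in the independent components of $\nabla h$ in the special frame. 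Requiring this quadratic form to be negative semi-definite should yield the admissible range of $k$, with the specific bound $\tfrac{29}{40}$ emerging from the tightest eigenvalue condition. The loss from the reaction-optimal value $k = \tfrac{3}{4}$ mirrors the analogous loss in Huisken's pinching estimate for hypersurface flow in a spherical background \cite{Huisken1987}.
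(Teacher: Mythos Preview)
Your overall framework is right, but you have swapped the roles of the gradient and reaction terms, and this misdiagnosis hides the two genuine ideas needed.

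\textbf{Gradient term.} The paper does \emph{not} use $\nabla Q=0$ here. It proves a new pointwise inequality (Proposition~\ref{prop_grad}, estimate \eqref{eqn_gradient3}): for $n=2$,
\[
|\nabla A|^2 \;\ge\; 2\,\nabla_{\!\!evol}K^\perp,
\]
obtained by expanding both sides in Codazzi-reduced components and applying Cauchy--Schwarz. Combined with $|\nabla A|^2\ge \tfrac34|\nabla H|^2$ this yields
\[
\mathcal G \le \bigl(-2+2\gamma+\tfrac83 k\bigr)|\nabla A|^2,
\]
which is nonpositive precisely when $\gamma\le 1-\tfrac43 k$. So the gradient term fixes the \emph{relation} $\gamma=1-\tfrac43 k$, not the numerical bound on $k$; for any $k<\tfrac34$ one can satisfy it. Your plan to exploit $\nabla Q=0$ is a different route and may or may not close, but it is not where $29/40$ comes from.

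\textbf{Reaction term.} Your claim that the choice $\gamma=1-\tfrac43 k$ ``cancels the worst-behaved reaction coming from the $|K^\perp|^2$ term'' is incorrect. After substituting the equality $Q=0$ to eliminate $|H|^2$, the $|K^\perp|^2$ coefficient is
\[
-\frac{1+2\gamma^2}{k-\tfrac12}+6,
\]
which with $\gamma=1-\tfrac43 k$ is still \emph{positive} throughout $\tfrac12<k<\tfrac34$ (e.g.\ at $k=\tfrac{29}{40}$ it is roughly $1.5$). This bad term must be absorbed by the several strictly negative terms in \eqref{eqn:Qrxn}; the paper does this by splitting the expression into two quadratic forms in $(a,c)$ with balancing parameters $\eta_1,\eta_2$ and checking nonpositivity. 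It is this absorption that fails once $k$ exceeds $\tfrac{29}{40}$. So the bound $k\le\tfrac{29}{40}$ comes from the \emph{reaction} side, given that $\gamma$ has already been pinned down by the gradient estimate --- the opposite of what you wrote.
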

With exception of the last estimate, the following gradient estimates are well-known; the third estimate is new.
\begin{proposition}\label{prop_grad}
We have the following gradient estimates:
\begin{subequations}
\begin{align}
|\nabla A | ^ 2 &\geq \frac  {3 }{ n+2 } | \nabla H | ^ 2  \label{eqn_gradient1} \\
|\nabla A | ^ 2 - \frac  {1 }  {n} |\nabla H| ^ 2 &\geq \frac  {2 ( n-1)}{3 n} | \nabla A | ^ 2 \label{eqn_gradient2}\\
|\nabla A| ^ 2 & \geq  2 \nabla_{\!\!evol} K^{\perp} \quad \text{if $ n =2 $ }. \label{eqn_gradient3}
\end{align}
\end{subequations}
\end{proposition}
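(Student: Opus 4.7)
The plan is to treat the three estimates separately, citing the known proofs for the first two and giving a new short argument for the third.

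For \eqref{eqn_gradient1} and \eqref{eqn_gradient2}, the essential structural input is the Codazzi identity $\nabla_k h_{ij\alpha} = \nabla_i h_{kj\alpha}$, which makes $\nabla A$ totally symmetric in its three lower indices and yields $\nabla_i H_\alpha = \sum_p \nabla_p h_{ip\alpha}$. Two standard Cauchy--Schwarz manipulations then produce the two stated bounds, exactly as in Huisken \cite{Huisken1984} in the hypersurface case and in Andrews--Baker \cite{Andrews2010} for general codimension. I would cite these rather than reproduce the computation.

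The essential new content is \eqref{eqn_gradient3}, and here I plan to exploit the coincidence that both $n = 2$ and the codimension is $2$ by uncovering a hidden determinantal structure. For each fixed pair $(p,q)$, arrange the four numbers $\nabla_q h_{ap\beta}$ with $a \in \{1,2\}$ tangent and $\beta \in \{1,2\}$ normal into a $2 \times 2$ matrix $M^{p,q}$ whose rows are indexed by $a$ and whose columns are indexed by $\beta$. The summand appearing in the definition of $\nabla_{\!\!evol}K^{\perp}$ is then exactly $\det M^{p,q}$, so that
\[
\nabla_{\!\!evol}K^{\perp} \;=\; \sum_{p,q}\det M^{p,q}.
\]

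Given this rewriting, the estimate reduces to the elementary fact that $2|\det M| \leq \|M\|_F^2$ for any $2 \times 2$ real matrix $M$, which is immediate from AM--GM applied to the two products forming the determinant. Summing over $p$ and $q$ and observing that $\sum_{p,q,a,\beta}(\nabla_q h_{ap\beta})^2 = |\nabla A|^2$ with all multiplicities matching, we obtain $2|\nabla_{\!\!evol}K^{\perp}| \leq |\nabla A|^2$ and hence \eqref{eqn_gradient3}. The only real obstacle is spotting the determinantal reorganization in the first place; once it is identified the bound is forced, with no loss. I remark that this clean pointwise estimate appears special to the case $n = p = 2$, since raising either parameter creates independent components of $\nabla A$ that the $2 \times 2$ determinant cannot simultaneously control, consistent with \eqref{eqn_gradient3} being stated only for $n=2$.
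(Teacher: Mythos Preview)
Your argument is correct. For \eqref{eqn_gradient1} and \eqref{eqn_gradient2} you and the paper do the same thing: cite Huisken/Hamilton and the orthogonal decomposition $\nabla_i h_{jk}=E_{ijk}+F_{ijk}$.

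For \eqref{eqn_gradient3} the routes genuinely diverge. The paper uses the Codazzi equation to reduce $\nabla A$ to its eight independent totally symmetric components, writes out both $\nabla_{\!\!evol}K^{\perp}$ and $|\nabla A|^2$ explicitly in those eight variables, and then compares term by term with Cauchy--Schwarz. Your proof never invokes Codazzi: you notice that for each fixed $(p,q)$ the summand in $\nabla_{\!\!evol}K^{\perp}$ is the determinant of the $2\times 2$ matrix $M^{p,q}_{a\beta}=\nabla_q h_{ap\beta}$, apply the elementary bound $2|\det M|\le \|M\|_F^2$, and sum. This is shorter and more conceptual, and it immediately gives the two-sided estimate $2|\nabla_{\!\!evol}K^{\perp}|\le |\nabla A|^2$, which is what the later applications (e.g.\ the gradient step in Proposition~\ref{prop:pinching}) actually use. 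The paper's explicit expansion, by contrast, exposes the Codazzi redundancies and would be the natural starting point if one wanted to test whether the constant can be improved; in the event both arguments deliver the same constant.
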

\begin{proof} 
The first two inequalities are proven in \cite{Huisken1984}, motivated by similar estimates in the Ricci flow \cite{Hamilton1982}. They are established by decomposing the tensor $ \nabla A $ into orthogonal components $\nabla _i  h _{ jk } = E _{ ijk} + F _{ ijk}$,
where 
\begin{align*}
 E_{ ijk} = \frac  {1 }{ n+2 } ( g _{ij} \nabla _k H + g_{ ik} \nabla _j H + g_{ jk} \nabla _i H ),
\end{align*}
from which it follows that $ |\nabla A | ^ 2 \geq | E | ^ 2 = \frac { 3 }{ n+ 2 } | \nabla H | ^ 2 $. The second estimate follows from the first. In order to prove the third inequality, we use the Codazzi equation to evaluate
\begin{multline*}
\sum _{p,q }\left ( \nabla_q h _{ 1p1} \nabla _ q h _{ 2p2} - \nabla _ q h _{ 2p 1 } \nabla _ q h _{ 1p 2 }\right )
= \nabla _{ 1 } h _{ 111} \nabla_1 h_{ 122} - \nabla _ 1 h_{ 112} \nabla _ 2 h _{ 111} + 2 \nabla _1 h _{ 222} \nabla _ 2 h _{ 111}  \\- 2 \nabla _ 1 h _{ 122} \nabla _ 1 h _{ 222} + \nabla _ 1 h _{ 221} \nabla _2 h _{ 222} - \nabla _ 1 h _{ 222} \nabla _ 2 h _{ 221}.
\end{multline*}
Writing down all the terms in $ |\nabla A | ^ 2 $ we get
\begin{align*}
|\nabla A| ^ 2 & =( \nabla _ 1 h _{ 111} )^ 2 + 3( \nabla _ 2 h _{ 111} )^ 2 + 3( \nabla _ 1 h _{ 122} )^ 2 \\
&+ 3( \nabla _{ 1 } h _{ 222} )^ 2 +( \nabla _2 h_{ 221} )^ 2 + (\nabla _ 2 h _{ 222} )^ 2 + 3 (\nabla _ 1 h _{ 221})^ 2 +(\nabla_1 h_{112}) ^2,
\end{align*}
and the estimate follows by applying the Cauchy-Schwarz inequality and comparing terms.
\end{proof}

\begin{proof}[Proof of Proposition 4.1]
Suppose the submanifolds satisfies $| A| ^ 2 +2 \s | K ^ \perp|  - k |H| ^ 2 < 0$ at the initial time. As the submanifold is compact and the inequality is strict, we can find an $\epsilon > 0$ such that $\mc Q := | A| ^ 2 +2 \s | K ^ \perp|  - k |H| ^ 2 + \epsilon< 0$ also holds at the initial time. Combining the evolution equations for $|A|^2$, $|K^{\perp}|$ and $|H| ^2 $ we have
\begin{equation*}
\frac { \partial }{ \partial t } \mc{Q} = \Delta \mc{Q} - 2 \left( |\nabla A | ^ 2 + 2\gamma \frac{\Kp}{|\Kp|}\nabla_{evol} \Kp - k|\nabla H| ^ 2 \right) \\ + 2 R_1 + 2\gamma R_3 - 2 kR  _2.
\end{equation*}
We deal with the gradient terms first. Using the the gradient estimates \eqref{eqn_gradient1} and \eqref{eqn_gradient3} we have
\begin{align*}
-2 \left( |\nabla A | ^ 2 + 2\gamma\frac{\Kp}{|\Kp|}\nabla_{evol} \Kp - k |\nabla H| ^ 2 \right) &\leq \left(- 2 + 2\gamma + 2\frac43k \right) |\nabla A|^2,
\end{align*}
which is less than zero provided $\gamma < (1-4/3k)$.

Next we deal with the reaction terms:
\begin{align}
\frac{d}{dt} \mc Q &= 2\sum _{ \alpha ,\beta}\bigg(\sum_{ i, j } h _{ ij\alpha} h_{ ij \beta } \bigg)^ 2  + 2| \Rp  | ^ 2 - 2 k \sum_{ i, j} \left ( \sum_{\alpha } H _ \alpha h_{ ij \alpha }\right ) ^ 2 + 2\gamma R_3   \nonumber \\
&= 2 | \Acirc_1 | ^ 4  - 2 \left( k - \frac { 2 } { n} \right) | \Acirc_1 | ^ 2 | H| ^ 2 - \frac 2 n \left( k - \frac { 1 }{ n} \right) | H| ^ 4 \nonumber \\
&\quad + 4 \bigg( \sum_{ i,j} \circo h _{ ij1}\circo  h _{ ij2 }\bigg)^2  + 2 \bigg( \sum_{ i ,j } \circo h_{ ij 2 } \circo h_{ ij 2 } \bigg) ^ 2 + 2 | \Rp | ^ 2 \nonumber \\
&\quad + 2\gamma|K^{\perp}|\left( |A|^2 + 2|\Acirc|^2 - 2b^2 \right). \label{eqn:Qrxn}
\end{align}
Written in the terms of the special orthonormal frames, the bracketed terms on the second last line above are
\begin{align*}
4  \bigg( \sum_{ i,j} \circo h _{ ij1}\circo  h _{ ij2 }\bigg)^2 & = 16 a ^ 2 b^ 2, \quad 2 \bigg( \sum_{ i ,j } \circo h_{ ij 2 } \circo h_{ ij 2 } \bigg) ^ 2 =  2 ( 2 b ^ 2 + 2 c ^ 2 ) ^ 2. 
\end{align*}
Now suppose, for a contradiction, that there exists a first point in time where $\mc Q = 0$. Computing at this point, as $\mc Q = 0$ we have $ \left( k - \frac 1 n \right) | H |^ 2 = (|\Acirc |^ 2 + 2 \s  | K^ \perp | +\e)$, and substituting this into \eqref{eqn:Qrxn} to eliminate the $|H|^2$ terms we obtain after some computation
\begin{equation}\label{eqn:Qrxn}
\begin{split}
\frac{d}{dt} \mc Q &= \left(-\frac{1}{k-1/2} + 2 \right)4 a^2b^2 + \left(-\frac{1}{k-1/2} + 2 \right) \gamma|\Kp||\Acirc_1|^2  \\
&\quad +  \left(-\frac{3}{k-1/2} + 6 \right) \gamma|\Kp||\Acirc_2|^2 + \left(-\frac{1}{k-1/2} + 2 \right)|\Acirc_2|^4 \\
&\quad + \left(-\frac{(1 + 2\gamma^2)}{k-1/2} + 6 \right)|\Kp|^2 \\
&\quad - \epsilon\left( 2 + \frac{1}{k-1/2} \right) |\Acirc_1|^2 - \frac{2\epsilon}{k-1/2}|\Acirc_2|^2 - \frac{3\epsilon\gamma|\Kp|}{k-1/2} - \frac{\epsilon^2}{k-1/2}.
\end{split}
\end{equation}
where we have we used $|\Acirc_1|^2 |\Acirc_2|^2 = 4a^2b^2 + |\Kp|^2$. With the exception of the $|\Kp|^2$ term, all terms are negative provided $k < 1$, which is the best constant we expect. We group the remaining terms into two quadratic forms to exploit the negative terms to control the $|\Kp|^2$ term, the most restrictive term. Discarding the negative terms not useful in controlling normal curvature, expanding and grouping terms we have
\begin{align*}
\frac{d}{dt} \mc Q &\leq 4c^2 \left\{ \left(-\frac{1}{k-1/2} + 2 \right)c^2 + \eta_1\left(-\frac{3}{k-1/2} + 6 \right) \gamma |ac|
 + \eta_2 \left(-\frac{(1 + 2\gamma^2)}{k-1/2} + 6 \right) a^2 \right\} \\
&\quad + 4|ac| \bigg\{ \left(-\frac{1}{k-1/2} + 2 \right)\gamma a^2 + (1-\eta_2)\left(-\frac{(1 + 2\gamma^2)}{k-1/2} + 6 \right) |ac| \\
&\quad + (1-\eta_1)\left(-\frac{3}{k-1/2} + 6 \right) \gamma c^2 \bigg\}.
\end{align*}
We now substitute $\gamma = 1 - 4/3 k - \delta$ in order to keep the gradient term negative, and use the parameters $\eta_1, \eta_2$ to shift as much bad normal curvature into the first curly bracket to consume all of the good $c^4$ term. As it does not seem possible to reach $k = 3/4$, we have been satisfied to numerical explore the parameter values, with the result that the above term is strictly negative for $k = 29/40$. Choosing $ k \leq \nicefrac { 29}{40 }$ and $ \gamma = 1 - \frac 4 3 k - \delta$ ensures that both the gradient and reaction terms of the evolution equation for $\mc Q$ are negative, which via the maximum principle provides a contradiction, and we conclude $\mc Q < 0$ is preserved by the flow.

\end{proof}
\begin{remarks}
In fact, not taking into account the gradient term so that we can choose $\gamma$ and $k$ independently, referring to \eqref{eqn:Qrxn}, we see the nonlinearity is non-positive if $ \gamma =1 ,k =1 $, which equates to $ | A | ^ 2 + 2 | K ^ \perp | \leq | H| ^ 2 $ or equivalently $|\Kp| \leq K$. This estimate for the nonlinearity is optimal, as the Clifford torus embedded in $ \R ^4 $ satifies as $ K ^ \perp =0 $ and $ |A| ^ 2 = |H| ^ 2 $. 
\end{remarks}
\section{Improvement of curvature pinching}
In the previous section we saw that curvature pinching is preserved by the mean curvature flow. In this section, we prove that curvature pinching actually improves along the flow. We show that in regions where mean curvature becomes large, the evolving surface becomes increasingly totally umbilic, ultimately allowing us to conclude convergence to a sphere.

\begin{theorem}\label{eqn:tracelessEst}
There exists constants $c_0 < \infty $ and $ \delta > 0$ both depending only on $ \Sigma_0$ such that for all time $ t \in [0, T)$ we have the estimate 
\begin{equation}
|\Acirc|^ 2 + 2\gamma |K^{\perp}| \leq  c_ 0 |H| ^{ 2 -\delta}. 
\end{equation}
\end{theorem}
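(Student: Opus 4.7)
Following the classical Huisken strategy \cite{Huisken1984} as adapted to high codimension in \cite{Andrews2010}, we establish uniform $L^p$ bounds on a rescaled pinching function and then apply Stampacchia iteration to promote these to an $L^\infty$ bound.

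\emph{Step 1 (Pinching quantity).} Fix $\sigma>0$ small, to be chosen, and set
\[ f_\sigma := \frac{|\Acirc|^2 + 2\gamma|K^{\perp}|}{|H|^{2-\sigma}}. \]
Proposition \ref{prop:pinching} preserves strict pinching, which combined with $|\Acirc|^2\le(k-\tfrac12)|H|^2$ ensures $|H|$ is bounded away from zero wherever the numerator is nonzero, so $f_\sigma$ is well-defined. To deal with the Lipschitz failure of $|K^{\perp}|=2|ac|$ on its zero set, I work with the mollifications $\sqrt{|K^{\perp}|^2+\eta^2}-\eta$ and pass to the limit $\eta\to 0$ in the resulting integral inequalities. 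The conclusion of the theorem is equivalent to $\sup_{\Sigma_t}f_\sigma\le c_0$ with $\delta=\sigma$.

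\emph{Step 2 (Evolution inequality).} Combining the evolution equations for $|A|^2$, $|K^{\perp}|$, and $|H|^2$ from Section 3 with a direct differentiation of the quotient, I derive
\[ (\partial_t-\Delta)f_\sigma \;\le\; \frac{2(1-\sigma/2)}{|H|^2}\langle\nabla|H|^2,\nabla f_\sigma\rangle + \frac{\mathcal{R}+\mathcal{G}}{|H|^{2-\sigma}}, \]
where $\mathcal{R}$ gathers reaction terms and $\mathcal{G}$ net gradient terms. The reaction analysis in the proof of Proposition \ref{prop:pinching} bounds $\mathcal{R}$ above by $-c\epsilon f_\sigma|H|^{4-\sigma}+C\sigma f_\sigma|H|^{4-\sigma}$. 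For the gradient block $\mathcal{G}$, the new estimate \eqref{eqn_gradient3} is used to absorb the awkward term $2\gamma(K^{\perp}/|K^{\perp}|)\nabla_{\!\!evol}K^{\perp}$ against $|\nabla A|^2$ produced by $(\partial_t-\Delta)|A|^2$, while \eqref{eqn_gradient1}--\eqref{eqn_gradient2} leave sufficient margin of $|\nabla A|^2$ to dominate the $|\nabla H|^2/|H|^2$ pieces generated by differentiating the fractional power $|H|^{2-\sigma}$. The net result is a Huisken-type inequality
\[ (\partial_t-\Delta)f_\sigma \;\le\; \frac{2(1-\sigma/2)}{|H|^2}\langle\nabla|H|^2,\nabla f_\sigma\rangle + C\sigma\,f_\sigma|A|^2 - c\epsilon\,f_\sigma|H|^2. \]

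\emph{Step 3 ($L^p$ estimate).} Multiply by $pf_\sigma^{p-1}$ and integrate over $\Sigma_t$; integration by parts of the Laplacian produces the good term $p(p-1)\int f_\sigma^{p-2}|\nabla f_\sigma|^2$, and Cauchy--Schwarz absorbs the mixed gradient term at the cost of a small fraction. The Michael--Simon Sobolev inequality applied to $f_\sigma^{p/2}$ together with interpolation then yields, for $\sigma\le\sigma_0(p)$,
\[ \frac{d}{dt}\int_{\Sigma_t}f_\sigma^p\,d\mu \;\le\; 0, \]
so that $\sup_{t\in[0,T)}\|f_\sigma(\cdot,t)\|_{L^p(\Sigma_t)}\le C(p,\Sigma_0)$ for all $p$ sufficiently large and $\sigma$ sufficiently small depending on $p$.

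\emph{Step 4 (Stampacchia iteration).} Fixing such a $\sigma$, the $L^p$ bound combined with the evolution inequality of Step 2 fits the Stampacchia--De Giorgi framework used in Section 5 of \cite{Andrews2010}; iterating over a suitable sequence of truncations $(f_\sigma-k_j)_+$ upgrades the $L^p$ bound to $f_\sigma\le c_0$ in $L^\infty$, which is the claim with $\delta=\sigma$.

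\textbf{Main obstacle.} The decisive and genuinely new step is Step 2: the gradient $2\gamma(K^{\perp}/|K^{\perp}|)\nabla_{\!\!evol}K^{\perp}$ generated by the $|K^{\perp}|$ evolution must be dominated by $|\nabla A|^2$ with precisely the margin afforded by \eqref{eqn_gradient3}, and simultaneously enough $|\nabla A|^2$ must remain to control the $\sigma|\nabla H|^2/|H|^2$ discrepancy from the fractional denominator. This mirrors and refines the gradient bookkeeping of Proposition \ref{prop:pinching}, and is the reason for the restriction $k\le 29/40$ already present in the preservation step.
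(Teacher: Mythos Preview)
Your Step~2 claim that the reaction block contributes a strictly negative term $-c\epsilon\,f_\sigma|H|^2$ is not justified, and this is where the proposal breaks down. Proposition~\ref{prop:pinching} is a computation carried out \emph{on the boundary} of the shifted cone $\{\mathcal Q=0\}$; it does not give a quantitative negativity for the quotient reaction
\[
R_1+\gamma R_3-\frac{|A|^2+2\gamma|K^\perp|}{|H|^2}\,R_2
\]
at interior points. In the paper's evolution of $f_\sigma$ this term is only non-positive and is simply discarded, leaving the bad piece $+2\sigma|A|^2 f_\sigma$ with no reaction term to offset it. Indeed, at an umbilic point the expression above vanishes identically, so no uniform strict negativity of the form $-c\epsilon f_\sigma|H|^2$ is available. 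Note also that if your claimed inequality held, the maximum principle would finish the argument immediately for $\sigma$ small enough that $C\sigma k<c\epsilon$, making your Steps~3--4 superfluous; this internal inconsistency is a signal that the reaction bound cannot be right.

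The mechanism you are missing is the contracted Simons identity. The paper computes the Simons nonlinearity exactly as $Z=2K|\Acirc|^2-2|K^\perp|^2$ and uses the pinching to obtain $Z\ge\epsilon_Z\bigl(|\Acirc|^2+2\gamma|K^\perp|\bigr)|H|^2$. Integrating the contracted Simons identity against $f_\sigma^{p-1}$ then yields a Poincar\'e-type inequality (Proposition~\ref{prop_eleven}) bounding $\int f_\sigma^p|H|^2$ by $\int f_\sigma^{p-1}|\nabla A|^2/|H|^{2(1-\sigma)}$ and $\int f_\sigma^{p-2}|\nabla f_\sigma|^2$. It is this inequality, combined with the good gradient term $-2\epsilon_\nabla|\nabla A|^2/|H|^{2(1-\sigma)}$ retained in the $f_\sigma$ evolution, that absorbs the bad reaction $2\sigma|A|^2 f_\sigma$ and gives $\frac{d}{dt}\int f_\sigma^p\le 0$ for $p$ large and $\sigma\lesssim 1/\sqrt p$. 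The Michael--Simon inequality you invoke does not substitute for this step: it cannot by itself convert $\int f_\sigma^{p-2}|\nabla f_\sigma|^2$ into control of $\int f_\sigma^p|H|^2$, and in the literature it enters only later, in the Stampacchia iteration, after the $L^p$ monotonicity has already been secured via Simons.
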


We seek to bound the function $f_{\sigma} := ( |\Acirc|^2 + 2\gamma|K^{\perp}| )/ |H|^{2(1-\sigma)}$, where $\sigma > 0$ and small. The reaction terms of the evolution equation for $f_{\sigma}$ contain a small positive quantity, impeding the use of the maximum principle to conclude the desired result. Following Huisken \cite{Huisken1984}, we proceed by exploiting a favourable gradient term with a Poincar\'e-type inequality and bounding $f_{\sigma}$ in $L^{\infty}$ by a Stampacchia iteration procedure. The derivation of the Poincar\'e-type inequality from integrating Simons' identity and the Stampacchia iteration are well-known in the mean curvature flow literature, however, in our case, we must also control the normal curvature.

\begin{proposition} 
For every $\sigma \in (0,1)$ and $\epsilon_{\nabla} :=  1 - \nicefrac43k - \gamma$ we have the evolution equation
\begin{align*}
\frac \partial { \partial t } f _{ \sigma} \leq \Delta f _\sigma + \frac{ 2 ( 1- \sigma) }{ |H|^2 } \langle \nabla _i |H|^2, \nabla _ i f _ \sigma \rangle - \frac { 2 \epsilon _\nabla } { |H| ^ { 2 ( 1-\sigma)} }| \nabla A | ^ 2 + 2 \sigma | A| ^ 2 f _ \sigma.
\end{align*}
\end{proposition}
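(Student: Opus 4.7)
The plan is to apply the parabolic quotient rule to $f_\sigma = u/v$, where $u = |\Acirc|^2 + 2\gamma|K^\perp|$ and $v = |H|^{2(1-\sigma)}$. A direct computation gives
\[
(\partial_t - \Delta)(u/v) = \frac{(\partial_t - \Delta)u}{v} - \frac{f_\sigma(\partial_t-\Delta)v}{v} + \frac{2}{v}\langle \nabla f_\sigma, \nabla v\rangle,
\]
so that with $v = |H|^{2(1-\sigma)}$ the last term is exactly the drift $\tfrac{2(1-\sigma)}{|H|^2}\langle\nabla|H|^2,\nabla f_\sigma\rangle$, while the middle term produces a ``bad'' $\tfrac{2(1-\sigma)f_\sigma|\nabla H|^2}{|H|^2}$ gradient contribution along with a favourable $-\tfrac{\sigma(1-\sigma)f_\sigma|\nabla|H|^2|^2}{|H|^4}$ that I would discard. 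Substituting the evolution equations of Section 3 for $u$ and $|H|^2$ then splits everything into a gradient part and a reaction part.

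For the gradient part, I first absorb the bad term using the preservation bound $u \leq (k-1/n)|H|^2$ from Proposition 4.1, giving $\tfrac{2(1-\sigma)f_\sigma|\nabla H|^2}{|H|^2}\leq \tfrac{2(1-\sigma)(k-1/n)|\nabla H|^2}{v}$. Collecting this with $-2|\nabla A|^2 + \tfrac{2}{n}|\nabla H|^2 - 4\gamma\tfrac{K^\perp}{|K^\perp|}\nabla_{evol}K^\perp$ from $(\partial_t-\Delta)u$ rescales the coefficient of $|\nabla H|^2$ to $2\tilde k$ with $\tilde k := k - \sigma(k - 1/n) \leq k$, and applying the Proposition \ref{prop_grad} estimates $|\nabla A|^2 \geq \tfrac{3}{4}|\nabla H|^2$ and $|\nabla A|^2 \geq 2|\nabla_{evol}K^\perp|$ yields $-2(1 - \tfrac{4\tilde k}{3} - \gamma)|\nabla A|^2/v \leq -2\epsilon_\nabla|\nabla A|^2/v$, exactly as in the proof of Proposition 4.1 but with $\tilde k$ in place of $k$.

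For the reaction part, the identity $|A|^2 + 2\gamma|K^\perp| = |H|^2/n + u$ lets me rearrange
\[
\frac{2R_1 - \tfrac{2R_2}{n} + 2\gamma R_3}{v} - \frac{2(1-\sigma)f_\sigma R_2}{|H|^2} = \frac{1}{v}\!\left[2R_1 + 2\gamma R_3 - \frac{2R_2(|A|^2 + 2\gamma|K^\perp|)}{|H|^2}\right] + \frac{2\sigma u R_2}{v|H|^2}.
\]
The second piece is bounded by $2\sigma|A|^2 f_\sigma$ via $R_2 \leq |H|^2|A|^2$. The bracketed piece is non-positive iff $R_1 + \gamma R_3 \leq |A_1|^2(|A|^2 + 2\gamma|K^\perp|)$, where $|A_1|^2 := R_2/|H|^2 = |\Acirc_1|^2 + |H|^2/n$; in the special frame of Section 3 and using the identity $4(\sum_{i,j}\Acirc_{ij1}\Acirc_{ij2})^2 + |R^\perp|^2 = 4|\Acirc_1|^2|\Acirc_2|^2$, this reduces to the homogeneous polynomial inequality
\[
2\bigl[(b^2+c^2) + \gamma|ac|\bigr]\mu \geq 4a^2b^2 + 12a^2c^2 + 4(b^2+c^2)^2 + 2\gamma|ac|(2a^2 + 4b^2 + 6c^2)
\]
in $a,b,c$ and $\mu = |H|^2/n$, which one verifies using the pinching lower bound $\mu \geq (|\Acirc|^2 + 2\gamma|K^\perp|)/(2k-1)$.

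The main obstacle is this last pointwise reaction inequality: it is the $f_\sigma$-level analogue of the reaction analysis at the heart of Proposition 4.1, and the constraint $k \leq \nicefrac{29}{40}$ reenters for the same arithmetic reason, the critical configurations being $b=0$, $|a|=|c|$, where $|R^\perp|^2$ and the cross term $\gamma|K^\perp||\Acirc_1|^2$ conspire most strongly against the pinching-supplied lower bound on $\mu$.
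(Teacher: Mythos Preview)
Your approach is correct and essentially the same as the paper's. Both arguments compute $\partial_t f_\sigma$, isolate the drift term $\tfrac{2(1-\sigma)}{|H|^2}\langle\nabla|H|^2,\nabla f_\sigma\rangle$, discard the non-positive $-\tfrac{\sigma(1-\sigma)f_\sigma|\nabla|H|^2|^2}{|H|^4}$, bound $R_2\leq |A|^2|H|^2$ to produce $2\sigma|A|^2 f_\sigma$, and handle the gradient block with Proposition~\ref{prop_grad}. Your bookkeeping via $\tilde k = k-\sigma(k-\tfrac1n)\leq k$ is a minor reorganisation: the paper instead keeps the full coefficient $\tfrac{|A|^2+2\gamma|K^\perp|}{|H|^2}\leq k$ on $|\nabla H|^2$ and discards a separate non-positive $\sigma$–gradient term, arriving at the identical constant $\epsilon_\nabla = 1-\tfrac43 k-\gamma$.

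The one place you go further than the paper is the reaction block: the paper simply asserts that $R_1+\gamma R_3-\tfrac{|A|^2+2\gamma|K^\perp|}{|H|^2}R_2\leq 0$ ``under our pinching assumption'', implicitly deferring to the Section~\ref{sec:preservation} analysis. Your reduction to the polynomial inequality in $(a,b,c,\mu)$ and the observation that substituting the pinching lower bound $\mu\geq(|\Acirc|^2+2\gamma|K^\perp|)/(2k-1)$ reproduces exactly the boundary computation of Proposition~\ref{prop:pinching} makes this deferral explicit and is a welcome clarification. Note, however, that for the purposes of \emph{this} proposition the constraint $k\leq\nicefrac{29}{40}$ is not re-derived here; it is inherited, since the preserved pinching is a hypothesis and the reaction inequality only becomes \emph{easier} (the coefficients $-\tfrac{1}{\kappa-1/2}+C$ become more negative) as the effective ratio $\kappa=(|A|^2+2\gamma|K^\perp|)/|H|^2$ drops below $k$.
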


\begin{proof}
Differentiating $f_{\sigma}$ in time and substituting in the relevant evolution equations we get
\begin{equation}\label{e: evol eqn f_sigma 2}
	\begin{split}
		\frac{\partial }{\partial t}f_{\sigma} &= \frac{\Delta\abs{A}^2 - 2\abs{\nabla A}^2 + 2R_1 }{ (\abs{H}^2)^{1-\sigma} }  +   \frac{ 2\gamma \left( \Delta\abs{ K^{\perp} } + 2 \Kp/|\Kp| \nabla_{evol} K^{\perp}  + R_3 \right)}{ (\abs{H}^2)^{1-\sigma} }  \\ 
		&\quad- \frac{1}{n}\frac{ ( \Delta\abs{H}^2 - 2\abs{\nabla H}^2 + 2R_2 ) }{ ( \abs{H}^2)^{1-\sigma} } \\
		&\quad- \frac{ (1-\sigma)(\abs{A}^2 + 2\gamma|K^\perp| - \nicefrac{1}{n}\abs{H}^2) }{ (\abs{H}^2)^{2-\sigma} }( \Delta\abs{H}^2 - 2\abs{\nabla H}^2 + 2R_2).
	\end{split}
\end{equation}
After some computation, we find the Laplacian of $f_{\sigma}$ is
\begin{equation}
\begin{split}
\Delta f_{\sigma} &= \frac{ \Delta ( \abs{A}^2 + 2\gamma|K^{\perp}| - \nicefrac{1}{n}\abs{H}^2 ) }{ ( \abs{H}^2 )^{1-\sigma} } - \frac{ 2(1-\sigma) }{ ( \abs{H}^2 )^{2-\sigma} } \big\langle\nabla_i (\abs{A}^2 + 2\gamma|K^{\perp}| - \nicefrac{1}{n}\abs{H}^2), \nabla_i\abs{H}^2 \big\rangle  \\
		&\quad - \frac{ (1-\sigma)(\abs{A}^2 + 2\gamma|K^{\perp}| - \nicefrac{1}{n}\abs{H}^2) }{ ( \abs{H}^2 )^{2-\sigma} } \Delta\abs{H}^2 \\
		&\quad+ \frac{ (2-\sigma)(1-\sigma)(\abs{A}^2 + 2\gamma|K^\perp| - \nicefrac{1}{n}\abs{H}^2 }{ ( \abs{H}^2 )^{3-\sigma} } \abs{\nabla\abs{H}^2}^2,
\end{split}
\end{equation}
and the gradients satisfy
\begin{multline}
-\frac{ 2(1-\sigma) }{ ( \abs{H}^2 )^{2-\sigma} } \big\langle \nabla_i (\abs{A}^2 +2\gamma|K^\perp| - \nicefrac{1}{n}\abs{H}^2), \nabla_i\abs{H}^2 \big\rangle \\ = -\frac{ 2(1-\sigma) }{ \abs{H}^2 } \big\langle \nabla_i\abs{H}^2, \nabla_i f_{\sigma} \big\rangle - \frac{ 2(1-\sigma)^2 }{ (\abs{H}^2)^2 } f_{\sigma}\abs{ \nabla\abs{H}^2 }^2.
\end{multline}
With the aid of the last two formulae above, equation \eqref{e: evol eqn f_sigma 2} can be manipulated into the form
\begin{equation*}
	\begin{split}
		\frac{\partial}{\partial t} f_{\sigma} &= \Delta f_{\sigma} + \frac{ 2(1-\sigma) }{ \abs{H}^2 } \big\langle \nabla_i\abs{H}^2, \nabla_i f_{\sigma} \big\rangle + \frac{ 2\sigma R_2 f_{\sigma} }{ \abs{ H }^2 } \\
		&\quad - \frac{ 2 }{ (\abs{H}^2)^{1-\sigma} } \left( \abs{\nabla A}^2 + 2\gamma \Kp/|\Kp|\nabla_{\!\!evol} K^{\perp} - \frac{ \abs{A}^2 +2\gamma\abs{ K^{\perp} } }{ \abs{H}^2 }\abs{ \nabla H }^2 \right)  \\
		&\quad - \frac{ \sigma(1-\sigma) }{ (|H|^2)^2 } f_{\sigma}\abs{ \nabla\abs{H}^2 }^2 - \frac{ 2\sigma(\abs{A}^2  + 2\gamma\abs{ K^{\perp} } - \nicefrac{1}{n}\abs{H}^2) }{ (\abs{H}^2)^{2-\sigma} }\abs{\nabla H}^2 \\
		&\quad+ \frac{2}{ (\abs{H}^2)^{1-\sigma} }\left( R_1 + \gamma R_3 - \frac{ \abs{A}^2 + 2\gamma\abs{ K^{\perp} } }{ \abs{H}^2 }R_2 \right).
	\end{split}
\end{equation*}
We discard the terms on the last two lines as these are non-positive under our pinching assumption. We estimate the last term on the first line by $R_2 \leq \abs{A}^2\abs{H}^2$, and the gradient terms on the second line by
\begin{multline}
	-\frac{ 2 }{ (\abs{H}^2)^{1-\sigma} } \left( \abs{\nabla A}^2 + 2\gamma\Kp/|\Kp|\nabla_{\!\!evol} K^{\perp} - \frac{ \abs{A}^2 +2\gamma\abs{ K^{\perp} } }{ \abs{H}^2 }\abs{ \nabla H }^2 \right)  \\ \leq -\frac{ 2 ( 1 - \nicefrac43k - \gamma)  }{ (\abs{H}^2)^{1-\sigma} } 
	\abs{ \nabla A }^2 \leq -  2\delta \frac{\abs{ \nabla A }^2}{ (\abs{H}^2)^{1-\sigma} }.
\end{multline}
\end{proof}

As devised by Huisken \cite{Huisken1984}, we exploit the negative gradient term involving $\epsilon_{\nabla}$ with a Poincar\'e-type inequality, derived by integrating Simons' identity. Contracting the Simons identity \eqref{eqn:SimonsId} with $A_{ij}$ we obtain
\begin{align}
\frac 12 \Delta | A | ^ 2 & =  A _{ ij} \cdot \nabla_i\nabla_j H + | \nabla A |^2+ Z, \label{eqn:contractedSimons}
\end{align}
where 
\begin{align*}
Z = \sum_{ i,j,p,\alpha,\beta } H_\alpha h _{ ip\alpha } h _{ ij \beta } h _{ pj \beta }  - \sum_{ \alpha,\beta } \bigg( \sum_{i,j} h_{ ij \alpha } h _{ ji \beta} \bigg)^2 - | \Rp | ^ 2.
\end{align*}
A lower bound on $Z$ was obtained in \cite{Andrews2010, Baker2011} by an inelegant series of estimates that obscures the dependance of the nonlinearity on the submanifold intrinsic and normal curvature. Below, we provide a new estimate for surfaces immersed in $\mathbb{R}^4$ that makes transparent the dependence of the nonlinearity on the submanifold intrinsic and normal curvature. As alluded to in the introduction, the refined form of the Simons identity is of interest in the classification of minimal submanifolds of spheres. 

\begin{prop}\label{prop:newSimonsId}
For a two-dimensional submanifold $\Sigma^2$ immersed in $\mathbb{R}^4$, the nonlinearity in the contracted Simons identity satisfies
\begin{equation*}
	Z = 2 K | \Acirc|^2 - 2 |K^\perp|^2.
\end{equation*}
\end{prop}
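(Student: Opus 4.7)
The plan is to verify the identity pointwise in the ``special orthonormal frame'' introduced in Section 3. Since $Z$, $K$, $|\Acirc|^2$ and $|K^\perp|^2$ are all scalar quantities built from traces of the second fundamental form, the metric, and the mean curvature vector, it suffices to check the identity at each point in the convenient frame in which $\nu_1 = H/|H|$ and $A_1$ is diagonal. In this frame, from \eqref{eqn_split}, the matrices representing the second fundamental form in the two normal directions are $A_1 = \text{diag}(|H|/2 + a,\ |H|/2 - a)$ and $A_2 = \bigl(\begin{smallmatrix} b & c \\ c & -b \end{smallmatrix}\bigr)$, with $|\Acirc|^2 = 2(a^2+b^2+c^2)$ and $K^\perp = 2ac$. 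The key algebraic simplification peculiar to codimension two is that $A_2$ is trace-free and symmetric, so $A_2^2 = (b^2+c^2)I$, while the Gauss equation gives $K = \det A_1 + \det A_2 = |H|^2/4 - (a^2+b^2+c^2)$.

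Next I would expand the three pieces of $Z$ in these parameters. The cubic Weingarten-type term $\sum_{i,j,p,\alpha,\beta} H_\alpha h_{ip\alpha} h_{ij\beta} h_{pj\beta}$ rewrites as $\sum_{\alpha,\beta} H_\alpha \tr(A_\alpha A_\beta^2) = |H|\sum_\beta \tr(A_1 A_\beta^2)$; using $\tr(A_1^3) = |H|^3/4 + 3|H|a^2$ and $\tr(A_1 A_2^2) = (b^2+c^2)|H|$, this equals $|H|^4/4 + |H|^2(3a^2+b^2+c^2)$. The quartic trace-squared term $\sum_{\alpha,\beta}(\tr(A_\alpha A_\beta))^2$ expands via $\tr(A_1^2) = |H|^2/2+2a^2$, $\tr(A_1 A_2) = 2ab$ and $\tr(A_2^2) = 2(b^2+c^2)$ to $(|H|^2/2+2a^2)^2 + 2(2ab)^2 + (2(b^2+c^2))^2$. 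The last piece $|R^\perp|^2 = 16a^2c^2$ is already recorded in \eqref{eqn:normalCurv}.

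Subtracting and collecting, the $|H|^4/4$ contributions cancel and the remainder collapses to $Z = |H|^2(a^2+b^2+c^2) - 4(a^2+b^2+c^2)^2 - 8a^2c^2$. On the other hand, expanding the target yields $2K|\Acirc|^2 - 2|K^\perp|^2 = (|H|^2 - 4(a^2+b^2+c^2))(a^2+b^2+c^2) - 8a^2c^2$, which is the same polynomial in $a,b,c,|H|$ after expanding $(a^2+b^2+c^2)^2$. The only real obstacle is careful arithmetic bookkeeping of the expansions; no geometric subtlety arises beyond the two codimension-two-specific facts exploited above, namely that $A_2^2$ is a scalar multiple of the identity and that the Gauss equation reduces to a sum of just two determinants.
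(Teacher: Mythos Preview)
Your proof is correct. Both you and the paper compute pointwise in the special orthonormal frame, but the organisation differs. The paper splits the cubic and quartic sums in $Z$ into diagonal and off-diagonal pieces and then, following Smyth, factors each pair so that the Gauss curvature $K$ emerges directly as a common factor (e.g.\ $(K + \sum_\alpha h_{12\alpha}^2)(h_{111}-h_{221})^2$), assembling the result from these geometrically meaningful fragments. You instead rewrite each term as a trace of a product of the matrices $A_1$, $A_2$, exploit the codimension-two identity $A_2^2 = (b^2+c^2)I$, expand $Z$ to the polynomial $|H|^2(a^2+b^2+c^2)-4(a^2+b^2+c^2)^2-8a^2c^2$, and check it matches the expansion of $2K|\Acirc|^2-2|K^\perp|^2$. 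Your route is shorter and more transparently algebraic; the paper's route makes visible \emph{why} $K$ appears, which is useful for the spherical-background sequel mentioned in the introduction. One small point: the special frame requires $|H|>0$, but since both sides of the identity are polynomial in the components of $A$, equality on the dense set $\{|H|>0\}$ (or, equivalently, the observation that when $H=0$ both $A_\alpha$ are traceless and your parametrisation with $|H|=0$ still applies) extends it everywhere.
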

\begin{proof}
The nonlinearity in the contracted Simons identity is
\begin{align*}
Z =  \sum_{ i,j,p,\alpha,\beta } H_\alpha h _{ ip\alpha } h _{ ij \beta } h _{ pj \beta }  -\sum_{ \alpha,\beta } \bigg( \sum_{i,j} h_{ ij \alpha } h _{ ij \beta} \bigg)^2 - |\Rp| ^ 2.
\end{align*}
Splitting the first term on the right into diagonal and off-diagonal summations, and using $h_{ij1} = 0$ for $i \neq j$, we get
\begin{align*}
\sum_{ i,j,p,\alpha,\beta } H_\alpha h _{ ip\alpha } h _{ ij \beta } h _{ pj \beta } &=   \sum_i h_{ii\alpha} \sum_{i,j} h_{ii\alpha} (h_{ii1})^2  + \sum_i h_{ii\alpha} \sum_{i,j} h_{ii\alpha} (h_{ii2})^2  \\
&\quad +  \sum_i h_{ii\alpha} \sum_{i \neq j} h_{ii\alpha} (h_{ij2})^2   + \sum_i h_{ii\alpha} \sum_{i \neq p} h_{ip\alpha} h_{ij\beta}h_{pj\beta}.
\end{align*}
The final term on the right is zero, as computing in the special orthonormal frames we see
\begin{align*}
\sum_i h_{ii\alpha} \sum_{i \neq p} h_{ip\alpha} h_{ij\beta}h_{pj\beta}&= H \, \sum_{i \neq p} h_{ip1} h_{ij\beta}h_{pj\beta} \\
& =0,
\end{align*}
since $h_{ip1} = 0$ for $i \neq p$.
We similarly split the second term on the right of Z into diagonal and off-diagonal sums, and putting all terms together we have
\begin{align*}
Z &= \sum_i h_{ii\alpha}  \sum_{i,j} h_{ii\alpha} (h_{ii1})^2 + \sum_i h_{ii\alpha}  \sum_{i,j} h_{ii\alpha} (h_{ii2})^2  + \sum_i h_{ii\alpha} \sum_{i \neq j} h_{ii\alpha} (h_{ij2})^2 \\ 
&\quad - \sum_{\alpha} \bigg( \sum_{i} h_{ii1} h_{ii\alpha} \bigg)^2 - \sum_{\alpha} \bigg( \sum_{i} h_{ii2} h_{ii\alpha} \bigg)^2 - \sum_{\alpha} \bigg( \sum_{i\neq j} h_{ij2} h_{ij\alpha} \bigg)^2\\ &\quad-2\sum_{\alpha,\beta}\bigg( \sum_{i=j} h _{ij\alpha}h_{ij\beta} \sum_{i\neq j}h_{ij\alpha}h_{ij\beta} \bigg)
 - |\Rp| ^ 2.\\
\end{align*}
We estimate these terms in pairs, gathering the first, second and third terms of lines one and two, respectively. Dealing with the first pair of terms, we follow \cite{Smyth1973} but keep track of the normal curvature terms, computing
\begin{align*}
\sum_i h_{ii\alpha} \sum_{i,j} h_{ii\alpha} (h_{ii1})^2  - \sum_{\alpha} \bigg( \sum_{i} h_{ii1} h_{ii\alpha} \bigg)^2 &= \left( K + \sum_{\alpha}(h_{12\alpha})^2 \right)(h_{111} - h_{221})^2 \\
&= K(4a^2) + 4a^2c^2.
\end{align*}
We estimate the second pair of terms in the same way, obtaining
\begin{align*}
\sum_i h_{ii\alpha}  \sum_{i,j} h_{ii\alpha} (h_{ii2})^2 -  \sum_{\alpha} \bigg( \sum_{i} h_{ii2} h_{ii\alpha} \bigg)^2 &= \left( K + \sum_{\alpha}(h_{12\alpha})^2 \right)(h_{112} - h_{222})^2 \\
&= K(4b^2) + 4b^2c^2.
\end{align*}
For the third pair of terms, as there are no diagonal terms to easily factor into the intrinsic curvature, we proceed by computing in the special orthonormal frames from the outset:
\begin{align*}
&\sum_i h_{ii\alpha}  \sum_{i \neq j} h_{ii\alpha} (h_{ij2})^2  - \sum_{\alpha} \bigg( \sum_{i\neq j} h_{ij2} h_{ij\alpha} \bigg)^2 \\
&\quad = 4c^2 \left(\frac{|H|^2}{4} - c^2\right) \\
&\quad= 4c^2 \left( \frac{|H|^2}{4} - a^2 - b^2 - c^2 \right) + 4c^2(a^2 + b^2) \\
&\quad= 4c^2 K + 4c^2(a^2 + b^2).
\end{align*}
Finally, as $h_{ij1} = 0$, the only non-zero contribution comes from $\alpha,\beta =2$ and we get
\begin{align*}
2\sum_{\alpha,\beta}\bigg( \sum_{i=j} h _{ij\alpha}h_{ij\beta} \sum_{i\neq j}h_{ij\alpha}h_{ij\beta} \bigg) &= 2\bigg( \sum_{i=j} h _{ij2}h_{ij2} \sum_{i\neq j}h_{ij2}h_{ij2} \bigg)\\
&=2(2 b^2)(2c^2) = 8 b^2c^2.
\end{align*}
Collecting all the terms together, and recalling $|\Rp|^2 = 16a^2c^2 = 4|K^\perp|^2$, we achieve
\begin{align*}
Z &= 2 K(2a^2 + 2b^2 + 2c^2) + 8a^2c^2 + 8b^2c^2 - 16a^2c^2-8 b^2c^2\\
&= 2K|\Acirc|^2 - 2|K^\perp|^2.
\end{align*}
\end{proof}
\begin{prop}[cf Lemma 5 \cite{Andrews2010}] 
For a two-dimensional submanifold $\Sigma^2$ immersed in $\mathbb{R}^4$, if the second fundamental form of $ \Sigma^2$ satisfies $ |A| ^ 2 < \nicefrac56 |H| ^ 2 $, then there exists a strictly positive constant $\epsilon_Z $ depending only on $ \Sigma _0 $ such that $ Z \geq \epsilon_Z ( |\Acirc|^2 + 2\gamma |K^{\perp}| ) |H| ^ 2 $.
\end{prop}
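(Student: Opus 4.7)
The plan is to combine the explicit formula $Z = 2K|\Acirc|^2 - 2|K^\perp|^2$ from Proposition \ref{prop:newSimonsId} with elementary pointwise inequalities among the three scalar invariants $|\Acirc|^2$, $|K^\perp|$, and $|H|^2$ available in codimension two. The strategy is really just to "solve" the refined Simons identity for a bound of the desired shape, with all the serious work having already been done in proving the formula for $Z$.

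First, I would use the Gauss equation for a surface in $\mathbb{R}^4$, which in the special orthonormal frame gives $2K = |H|^2/2 - |\Acirc|^2$ (equivalently, combine $2K = |H|^2 - |A|^2$ with $|\Acirc|^2 = |A|^2 - |H|^2/2$). Under the standing hypothesis $|A|^2 < \nicefrac{5}{6}|H|^2$, this yields $|\Acirc|^2 < |H|^2/3$, and the compactness of $\Sigma$ (or more naturally the preserved pinching of Proposition \ref{prop:pinching}, which gives $|\Acirc|^2 \leq \nicefrac{9}{40}|H|^2$ uniformly along the flow) supplies a margin $\eta = \eta(\Sigma_0) > 0$ with $|\Acirc|^2 \leq (1/3 - \eta)|H|^2$. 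Next, from \eqref{eqn:normalCurv} we have $K^\perp = 2ac$ while $|\Acirc|^2 = 2(a^2+b^2+c^2)$, so AM-GM gives the key codimension-two inequality $|K^\perp| = 2|ac| \leq a^2 + c^2 \leq |\Acirc|^2/2$, and consequently $2|K^\perp|^2 \leq |K^\perp||\Acirc|^2 \leq |\Acirc|^4/2$.

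Substituting these into Proposition \ref{prop:newSimonsId} gives
\[
Z \;\geq\; \Bigl(\tfrac{1}{2}|H|^2 - |\Acirc|^2\Bigr)|\Acirc|^2 - \tfrac{1}{2}|\Acirc|^4 \;=\; \tfrac{1}{2}|\Acirc|^2\bigl(|H|^2 - 3|\Acirc|^2\bigr),
\]
and the margin from the first step yields $|H|^2 - 3|\Acirc|^2 \geq 3\eta|H|^2$, so $Z \geq \tfrac{3\eta}{2}|\Acirc|^2|H|^2$. For the right-hand side, the same inequality $|K^\perp| \leq |\Acirc|^2/2$ gives $|\Acirc|^2 + 2\gamma|K^\perp| \leq (1+\gamma)|\Acirc|^2$, so the choice $\epsilon_Z := 3\eta/(2(1+\gamma)) > 0$ (depending only on $\Sigma_0$ through $\eta$) delivers the claimed bound $Z \geq \epsilon_Z(|\Acirc|^2 + 2\gamma|K^\perp|)|H|^2$.

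The only subtlety is extracting a uniform $\eta > 0$ from the pointwise strict inequality $|A|^2 < \nicefrac{5}{6}|H|^2$; this is where the dependence on $\Sigma_0$ enters, and it is cleanest to cite Proposition \ref{prop:pinching} directly rather than invoke bare compactness, since the proposition is intended to be applied along the flow. The degenerate case $|\Acirc|^2 = 0$ (where $a = b = c = 0$ forces $|K^\perp| = 0$ as well) is trivial, as both sides vanish. Beyond this, no further estimation is needed — the refined Simons identity of Proposition \ref{prop:newSimonsId} has already reduced $Z$ to a purely algebraic expression in the invariants, so the proof amounts to a short chain of AM-GM and Gauss-equation manipulations.
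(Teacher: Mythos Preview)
Your proof is correct and follows essentially the same route as the paper: both start from $Z = 2K|\Acirc|^2 - 2|K^\perp|^2$, invoke the codimension-two bound $|K^\perp| \leq \tfrac12|\Acirc|^2$, and use the Gauss relation $2K = |H|^2 - |A|^2$ together with the pinching $|A|^2 < \tfrac56|H|^2$ to make $K - \tfrac14|\Acirc|^2$ (equivalently $\tfrac12(|H|^2 - 3|\Acirc|^2)$) strictly positive. Your write-up is in fact more complete than the paper's, since you explicitly absorb $|\Acirc|^2 + 2\gamma|K^\perp| \leq (1+\gamma)|\Acirc|^2$ and name the constant $\epsilon_Z$, whereas the paper leaves this final step implicit.
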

\begin{proof}
We can simply estimate $|K^{\perp}| \leq \nicefrac12 |\Acirc|^2$, in which case
\[ Z \geq 2 |\Acirc|^2 ( K - \frac14 |\Acirc|^2 ). \]
For a surface $K = \nicefrac12(|H|^2 - |A|^2)$, and therefore $K - \frac14 |\Acirc|^2 >0$ so long as $ |A| ^ 2 < \nicefrac56 |H| ^ 2$. The estimate can obviously be optimised by more careful use of the pinching inequality.
\end{proof}
The lower bound on $Z$ furnishes the following Poincar\'e-type inequality. The proof of this estimate is similar to the proof of the corresponding estimate in \cite{Huisken1984, Andrews2010}, except for the appearance of the Laplacian of the normal curvature. We only show how to deal with this last term and refer the reader to \cite{Andrews2010} and \cite{Baker2011} for the remainder of the calculations.
\begin{proposition}[cf Proposition 11 \cite{Andrews2010}]\label{prop_eleven} 
For every $ p \geq 2 $ and $ \eta > 0$ we have the estimate
\begin{equation}\label{eqn:PoincareType}
\int _{ \Sigma} f ^ p _ \sigma | H| ^ 2 d \mu _g \leq \frac { (4p\eta + 10 )}{ \epsilon_Z } \int _{ \Sigma} \frac { f ^ { p-1 }_ \sigma}{ | H| ^ { 2 ( 1- \sigma )}} |\nabla A | ^ 2 d \mu _ g  + \frac{3(p-1)}{\epsilon_Z\eta}\int_{\Sigma} f_{\sigma}^{p-2}\abs{\nabla f_{\sigma}}^2 \, d\mu_g.
\end{equation}
\end{proposition}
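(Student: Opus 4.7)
The plan is to follow the Huisken--Andrews--Baker strategy of integrating the contracted Simons identity against a suitable weight, with one additional step to accommodate the normal-curvature contribution in $f_\sigma$. The starting point is the pointwise lower bound $Z \geq \epsilon_Z(|\Acirc|^2 + 2\gamma|K^\perp|)|H|^2 = \epsilon_Z f_\sigma |H|^{2(2-\sigma)}$ from the preceding proposition. Rearranging as $\epsilon_Z f_\sigma^p |H|^2 \leq f_\sigma^{p-1}|H|^{-2(1-\sigma)} Z$ and integrating over $\Sigma$ gives the master inequality
\begin{equation*}
\epsilon_Z \int_\Sigma f_\sigma^p |H|^2 \, d\mu_g \leq \int_\Sigma \frac{f_\sigma^{p-1}}{|H|^{2(1-\sigma)}} Z \, d\mu_g,
\end{equation*}
which reduces the task to estimating the right-hand side by the two integrals appearing in the proposition.

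Next I would substitute $Z = \tfrac12 \Delta|A|^2 - A_{ij}\cdot\nabla_i\nabla_j H - |\nabla A|^2$ from \eqref{eqn:contractedSimons}, drop the non-positive $-|\nabla A|^2$ contribution, and integrate by parts to move derivatives off both $\Delta|A|^2$ and $\nabla_i\nabla_j H$, using the Codazzi identity $\nabla_i h_{ij\alpha} = \nabla_j H_\alpha$ to collapse one derivative on the Hessian term. A Young inequality with parameter $\eta$ then splits the resulting cross terms into pure $|\nabla A|^2$ and $|\nabla f_\sigma|^2$ pieces, while the gradient estimate \eqref{eqn_gradient1} converts the $|\nabla H|^2$ that appears into $|\nabla A|^2$. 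This portion of the argument is essentially identical to the one in \cite{Andrews2010, Baker2011} and can be imported verbatim.

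The genuinely new ingredient is that, in re-expressing $\nabla|A|^2$ in terms of $\nabla f_\sigma$ during integration by parts, the identity
\begin{equation*}
\nabla|A|^2 = |H|^{2(1-\sigma)}\nabla f_\sigma + (1-\sigma)f_\sigma |H|^{-2\sigma}\nabla|H|^2 - 2\gamma\nabla|K^\perp| + \tfrac{1}{n}\nabla|H|^2
\end{equation*}
produces an extra $\nabla|K^\perp|$ cross term that is absent in \cite{Andrews2010}. To dispose of it I plan to use the new gradient estimate \eqref{eqn_gradient3}, $|\nabla A|^2 \geq 2\nabla_{\!\!evol}K^\perp$, together with a pointwise bound of $|\nabla|K^\perp||$ by $|\nabla A|$ and one further Young inequality, which absorbs the cross term into the $|\nabla A|^2$ integral on the right-hand side.

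The principal obstacle is the bookkeeping required to verify that every $|K^\perp|$-derived term fits into the two clean summands with constants no worse than the stated $(4p\eta + 10)/\epsilon_Z$ and $3(p-1)/(\epsilon_Z\eta)$, and in particular that the use of \eqref{eqn_gradient3} does not introduce any $\eta$-independent cross terms that would spoil the shape of the final estimate. Given the authors' remark that only this step is genuinely new, once this verification is complete the proposition follows from the Andrews--Baker template.
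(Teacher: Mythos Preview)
Your overall strategy is the same as the paper's: isolate $Z$ via the contracted Simons identity, use the lower bound $Z \geq \epsilon_Z f_\sigma |H|^{2(2-\sigma)}$ to produce the left-hand side, and absorb all remaining terms into the two integrals on the right after integration by parts and Young's inequality. The paper organises this by first writing out $\Delta f_\sigma$ (so the normal-curvature contribution appears as a separate $\Delta|K^\perp|$ term), multiplying by $f_\sigma^{p-1}$, integrating, and then integrating the $\Delta|K^\perp|$ term by parts; your route, substituting for $Z$ and picking up $\nabla|K^\perp|$ when rewriting $\nabla|A|^2$ in terms of $\nabla f_\sigma$, is an equivalent rearrangement.

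There is, however, a concrete gap in how you plan to handle the normal-curvature cross term. The estimate \eqref{eqn_gradient3}, $|\nabla A|^2 \geq 2\nabla_{\!\!evol}K^\perp$, is a bound on the specific quadratic form $\nabla_{\!\!evol}K^\perp = \sum(\nabla_q h_{ip\alpha}\nabla_q h_{jp\beta} - \nabla_q h_{jp\alpha}\nabla_q h_{ip\beta})$ that arises in the evolution equation of $K^\perp$; it says nothing about $\nabla_i|K^\perp|$, which is what actually appears here. So \eqref{eqn_gradient3} plays no role in this proposition. What the paper uses instead is the pointwise bound
\[
|\nabla K^\perp| \leq 4|\Acirc|\,|\nabla A|,
\]
obtained directly from $K^\perp = 2ac$ (equation \eqref{eqn:normalCurv}). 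The factor $|\Acirc|$ is not optional: a bare bound $|\nabla|K^\perp|| \leq C|\nabla A|$ is dimensionally impossible, and more to the point the $|\Acirc|$ is exactly what supplies the missing $f_\sigma^{1/2}$ needed so that, after Young's inequality with parameter $\eta$ on the cross terms $\nabla_i f_\sigma\,\nabla_i|K^\perp|$ and $\nabla_i|H|\,\nabla_i|K^\perp|$, the two pieces land with the correct powers $f_\sigma^{p-1}|H|^{-2(1-\sigma)}|\nabla A|^2$ and $f_\sigma^{p-2}|\nabla f_\sigma|^2$. Once you replace the appeal to \eqref{eqn_gradient3} by this estimate, the bookkeeping goes through and the stated constants are obtained.
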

\begin{proof}
Using the contracted form of Simons' identity, the Laplacian of $f_{\sigma}$ can be expressed as
	\begin{align*}
		\Delta f_{\sigma} &= \frac{2}{\abs{H}^{2(1-\sigma)}}\big\langle \Acirc_{ij}, \nabla_i\nabla_j H \big\rangle + \frac{ 2 }{ \abs{H}^{2(1-\sigma)}} |\nabla \Acirc |^2 +  \frac{2}{\abs{H}^{2(1-\sigma)}}Z  \\
		&\quad  - \frac{2(1-\sigma)}{|H|^2} \langle \nabla_i |H|^2, \nabla_i f_{\sigma} \rangle - \frac{\sigma(1-\sigma)}{(|H|^2)^2} f_{\sigma}|\nabla |H||^2 - (1-\sigma)f_{\sigma}\Delta |H|^2 \\
		&\quad + \frac{ 2\gamma \Delta|K^{\perp}| }{ |H|^{2(1-\sigma) } }.
	\end{align*}
We now multiply by $f_{\sigma}^{p-1}$ and estimate the terms on the first two lines in the same manner as \cite{Andrews2010}, the only difference being we estimate in terms of $|\nabla A|^2$ instead of $| \nabla H |^2$, which is easily done as a final step by $|\nabla H|^2 \leq \nicefrac43 |\nabla A|^2$.
We now show how to deal with the new term on the last line involving the normal curvature. In the first step, we integrate and use Green's first identity to get
\begin{align}
	&\int \frac{ f^{p-1} \Delta |K^{\perp}| }{ |H|^{2(1-\sigma) } } \, d\mu_g  \notag \\ 
	&\quad= \int \nabla_i \left( \frac{ f^{p-1} }{ |H|^{2(1-\sigma)} } \right) \nabla_i |K^{\perp}| \, d\mu_g \notag   \\
	&\quad= (p-1)\int \frac{ f_{\sigma}^{p-2} \nabla_i f_{\sigma} \nabla_i |K^{\perp}| }{ |H|^{2(1-\sigma)} } \, d\mu_g - 2(1-\sigma)\int \frac{ f_{\sigma}^{p-1} \nabla_i |H| \nabla_i |K^{\perp}| }{ |H|^{2(1-\sigma)+1} } \, d\mu_g. \label{eqn:Poincare1}
\end{align}
Inspection of the formula for the normal curvature \eqref{eqn:normalCurv} reveals we can estimate $|\nabla K^{\perp}| \leq 4|\Acirc| |\nabla A|$. We use this last inequality and the Peter-Paul inequality to estimate equation \eqref{eqn:Poincare1} by
\begin{multline*}
	\int \frac{ f^{p-1} \Delta |K^{\perp}| }{ |H|^{2(1-\sigma) } } \, d\mu_g \\ \leq \frac{ 4(p-1) }{ \eta } \int  f_{\sigma}^{p-2} | \nabla f_{\sigma} |^2 \, d\mu_g + \left( 4(p-1)\eta + 10 \right)  \int  \frac{ f_{\sigma}^{p-1} | \nabla A |^2 }{ |H|^{2(1-\sigma) } } \, d\mu_g.
\end{multline*}
The proposition follows by combining this last estimate with the aforementioned estimates of \cite{Andrews2010}.
\end{proof}

The Poincar\'e-type inequality \eqref{eqn:PoincareType} allows us to prove sufficiently high $L^p$-norms of $f_{\sigma}$ are non-increasing in time, and crucially, that $\sigma$ decays like $1/\sqrt{p}$ as $p \rightarrow \infty$.
\begin{prop}
There exists constants $ c_3 $ and $ c_4$ depending on $ \Sigma _0 $ such that if $ p \geq c _ 3 $ and $ \sigma \leq \frac { c _4}{ \sqrt { p }} $ then for all time $ t \in [0,T)$ we have
\begin{equation*}
\frac{d}{dt} \int_{\Sigma} f_{\sigma}^p \, d\mu_g \leq 0.
\end{equation*}
\end{prop}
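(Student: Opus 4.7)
The plan is to differentiate the $L^p$-norm of $f_\sigma$ in time, rewrite the result so that every term other than the reaction $2\sigma\int |A|^2 f_\sigma^p\,d\mu_g$ is manifestly non-positive or of a known sign, and then swap that single bad term against the negative gradient terms via the Poincar\'e-type inequality of Proposition \ref{prop_eleven}. Balancing the Young parameters will then dictate the required scaling $\sigma\sim 1/\sqrt{p}$.

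First I would differentiate using $\partial_t\,d\mu_g = -|H|^2 d\mu_g$ to obtain
\[
\frac{d}{dt}\int_\Sigma f_\sigma^p\,d\mu_g = p\int_\Sigma f_\sigma^{p-1}\,\partial_t f_\sigma\,d\mu_g - \int_\Sigma f_\sigma^p|H|^2\,d\mu_g,
\]
substitute the evolution inequality from the previous proposition, and integrate by parts on the $\Delta f_\sigma$ term to generate $-p(p-1)\int f_\sigma^{p-2}|\nabla f_\sigma|^2\,d\mu_g$. The cross drift $\frac{2(1-\sigma)}{|H|^2}\langle\nabla|H|^2,\nabla f_\sigma\rangle$ I would dispatch with Peter-Paul: one half of the Young product is absorbed into the Laplacian's negative gradient, and the residual is estimated through $|\nabla H|^2\leq\tfrac{4}{3}|\nabla A|^2$ and swallowed by \emph{half} of the $-2p\epsilon_\nabla\!\int f_\sigma^{p-1}|\nabla A|^2/|H|^{2(1-\sigma)}$ term supplied by the evolution inequality. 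After estimating $|A|^2\leq k|H|^2$ using preservation of pinching, the only remaining positive contribution is $2\sigma p k\int f_\sigma^p|H|^2\,d\mu_g$.

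This is exactly where Proposition \ref{prop_eleven} does its work. Applying it with parameter $\eta>0$ gives
\[
2\sigma p k\!\int f_\sigma^p|H|^2\,d\mu_g \leq \frac{2\sigma p k(4p\eta+10)}{\epsilon_Z}\!\int\!\frac{f_\sigma^{p-1}|\nabla A|^2}{|H|^{2(1-\sigma)}}\,d\mu_g + \frac{6\sigma p(p-1)k}{\epsilon_Z\eta}\!\int f_\sigma^{p-2}|\nabla f_\sigma|^2\,d\mu_g.
\]
I want the two terms on the right to disappear into the remaining halves of the $|\nabla A|^2$ and $|\nabla f_\sigma|^2$ budgets: the first demands $\sigma k(4p\eta+10)\leq \tfrac12\epsilon_Z\epsilon_\nabla$ and the second demands $\sigma\leq \tfrac{\epsilon_Z\eta}{12k}$. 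Choosing $\eta$ proportional to $\sigma$ (with the proportionality constant fixed by the second condition) reduces the first inequality to $C p\sigma^2 + C'\sigma \leq \tfrac12\epsilon_Z\epsilon_\nabla$, which is satisfied once $\sigma\leq c_4/\sqrt{p}$ for a suitably small $c_4$ and $p\geq c_3$ large enough that the linear $\sigma$-contribution is subdominant to the quadratic one.

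The main obstacle I anticipate is purely bookkeeping: one must split the $\epsilon_\nabla$ and $p(p-1)$ budgets into two disjoint halves from the outset, spending one half to absorb the drift from the $\nabla|H|^2$ cross term and reserving the other half for the Poincar\'e step, so as to avoid a circular constraint where the same negative budget is double-counted. Once this separation is enforced and the scaling $\eta\propto\sigma\propto 1/\sqrt{p}$ is in place, every term on the right of the $t$-derivative is either manifestly non-positive or absorbed, and $\frac{d}{dt}\int f_\sigma^p\,d\mu_g\leq 0$ follows directly.
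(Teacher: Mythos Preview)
Your proposal is correct and is exactly the standard Huisken--Andrews--Baker argument the paper has in mind; the paper in fact omits the proof of this proposition, stating only that the Poincar\'e-type inequality allows one to show high $L^p$-norms of $f_\sigma$ are non-increasing with $\sigma\sim 1/\sqrt{p}$, and referring implicitly to \cite{Huisken1984} and \cite{Andrews2010}. Your handling of the drift term via Peter--Paul, the use of $f_\sigma^p/|H|^2\leq (k-\tfrac12)f_\sigma^{p-1}/|H|^{2(1-\sigma)}$ together with $|\nabla H|^2\leq\tfrac43|\nabla A|^2$, and the splitting of the $|\nabla f_\sigma|^2$ and $|\nabla A|^2$ budgets before invoking Proposition~\ref{prop_eleven} with $\eta\propto\sigma$ are all as in the cited references, and the resulting constraint $Cp\sigma^2+C'\sigma\leq\tfrac12\epsilon_Z\epsilon_\nabla$ yields precisely the claimed scaling.
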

The following estimate, which states that higher powers of mean curvature can be absorbed into $f_{\sigma}$, depends on the refined decay enabled by the Poincar\'e-type inequality.

\begin{prop}\label{prop:powersOfHabsorbed}
There exists constants $c_5$ and $c_6$ depending only on $\Sigma_0$ such if $p \geq c_5$ and $\sigma \leq c_6/\sqrt{p})$, then for all time $ t \in [0,T)$ we have the estimate
	\begin{equation*}
		\int\limits_{\Sigma} |H|^n f_{\sigma}^p \, d\mu_g \leq \int\limits_{\Sigma}f_{\sigma'}^p \, d\mu_g.
	\end{equation*}
\end{prop}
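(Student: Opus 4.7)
The plan is to observe that this proposition is, at its heart, a pointwise algebraic identity dressed up as an integral inequality. From the definition
$$f_\sigma = \frac{|\Acirc|^2 + 2\gamma|K^{\perp}|}{|H|^{2(1-\sigma)}},$$
a direct computation gives pointwise
$$|H|^n f_\sigma^p \;=\; \frac{(|\Acirc|^2 + 2\gamma|K^{\perp}|)^p}{|H|^{2p(1-\sigma) - n}} \;=\; \frac{(|\Acirc|^2+2\gamma|K^{\perp}|)^p}{|H|^{2p(1-\sigma')}} \;=\; f_{\sigma'}^p,$$
provided we set $\sigma' := \sigma + n/(2p)$, so that the exponents satisfy $2p(1-\sigma) - n = 2p(1-\sigma')$. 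Integrating yields the identity $\int_\Sigma |H|^n f_\sigma^p\, d\mu_g = \int_\Sigma f_{\sigma'}^p\, d\mu_g$, which implies the stated inequality. The quotient $f_\sigma$ makes sense throughout the flow because Proposition \ref{prop:pinching} preserves the strict positivity of $|H|$ (via the $\epsilon$ in that proposition).

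The conditions $p \geq c_5$ and $\sigma \leq c_6/\sqrt{p}$ are dictated not by the estimate above but by the desire that $\sigma'$ itself remain admissible for the $L^p$-decay proposition that precedes this one, so that the right-hand side $\int f_{\sigma'}^p\, d\mu_g$ is controlled uniformly in $t$ when iterated. Since $\sigma' = \sigma + n/(2p)$ and $n/(2p) = o(1/\sqrt{p})$, one picks $c_6$ strictly smaller than the constant $c_4$ from the previous proposition, and then fixes $c_5$ large enough that $c_6/\sqrt{p} + n/(2p) \leq c_4/\sqrt{p}$ for every $p \geq c_5$; concretely $c_5 = n^2/(4(c_4 - c_6)^2)$ works.

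I do not anticipate any real obstacle here. The content is carried entirely by the preserved pinching $|H|>0$, the Poincar\'e-type inequality, and the monotonicity of $L^p$-norms established in the previous proposition. The purpose of this statement is purely preparatory for the Stampacchia iteration that will upgrade the $L^p$-bounds into the $L^\infty$-bound of Theorem \ref{eqn:tracelessEst}; the ability to trade a factor of $|H|^n$ for a slightly increased exponent $\sigma'$ is exactly what makes the iteration close, so I would simply record the identity with a short remark on parameter bookkeeping.
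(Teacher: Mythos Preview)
Your argument is correct: the displayed estimate is in fact a pointwise identity once one sets $\sigma' := \sigma + n/(2p)$, and your bookkeeping on the constants $c_5, c_6$ (chosen so that $\sigma'$ still falls within the admissible range for the preceding $L^p$-monotonicity proposition) is exactly the right point. The paper itself does not supply a proof of this proposition; it states it and defers to \cite{Huisken1984} for details of this step and the subsequent Stampacchia iteration. Your proof is the standard one found there and in \cite{Andrews2010}, so there is nothing to compare against and no discrepancy to report.
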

With the last estimate in place, we can proceed by a Stampacchia iteration argument to bound $f_{\sigma}$ in $L^{\infty}$. We refer the reader to \cite{Huisken1984} for the details.

\section{Convergence to a round point}
The estimate of the previous section enables us to characterise the asymptotic shape of the evolving submanifolds as $t \rightarrow T$. We achieve this by performing a type I blowup and utilising the compactness theorem for mean curvature flow as proven in \cite{Baker2011}. The interested reader may like to compare the following argument with the corresponding argument for the Ricci flow, which can found, for example, in \cite{Topping2006}. Here the Codazzi equation performs the same role as the contracted second Bianchi identity, and the Codazzi Theorem that of Schur's Theorem.  For a proof of the Codazzi Theorem we refer the reader to \cite{Spivak1979}.

\begin{theorem}
Let $F : \Sigma^2 \times [0, T) \rightarrow \mathbb{R}^{4}$ be a solution of the mean curvature flow.  Assume that the initial submanifold $\Sigma_0$ is closed and satisfies $\abs{ H }_{min} > 0$ and $\abs{A}^2 + 2\gamma|K^{\bot}| \leq k\abs{H}^2$, where $ \gamma = 1 - \nicefrac43 k$ and $k \leq \nicefrac {29}{40}$. Then there exists a sequence of rescaled mean curvature flows $F_j : \Sigma^2 \times I_j \rightarrow \mathbb{R}^{4}$ containing a subsequence of mean curvature flows (also indexed by $j$) that converges to a limit mean curvature flow $F_{\infty} : \Sigma^2_{\infty} \times (-\infty, 0] \rightarrow \mathbb{R}^{4}$ on compact sets of $\mathbb{R}^{4} \times \mathbb{R}$ as $j \rightarrow \infty$. Moreover, the limit mean curvature flow is a shrinking sphere.
\end{theorem}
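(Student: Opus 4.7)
The plan is to run a standard parabolic blowup at a sequence of spacetime points where the curvature concentrates, extract a subsequential limit via the compactness theorem for mean curvature flow from \cite{Baker2011}, and then use the improvement estimate of Theorem \ref{eqn:tracelessEst} to show that the limit is totally umbilic with flat normal bundle, hence, by the Codazzi theorem, a shrinking round two-sphere.

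First I would select a sequence $(p_j,t_j) \in \Sigma^2 \times [0,T)$ with $t_j \nearrow T$ and $\lambda_j^2 := |A|^2(p_j,t_j) = \max_{\Sigma \times [0,t_j]} |A|^2$, and by Theorem \ref{thm:longTimeExistence} we have $\lambda_j \to \infty$. Define the rescaled flows
\[
F_j(p,\tau) := \lambda_j \bigl( F(p, t_j + \lambda_j^{-2}\tau) - F(p_j, t_j) \bigr), \qquad \tau \in I_j := [-\lambda_j^2 t_j, \lambda_j^2(T-t_j)),
\]
so that $|A_j|^2(p_j,0) = 1$ and, by the choice of $\lambda_j$, $|A_j|^2 \le 1$ on $\Sigma \times [\lambda_j^2(-t_j),0]$. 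Combined with the preserved pinching $|A|^2 \le k|H|^2$, this yields uniform bounds on $|A_j|$ on compact time intervals, which, together with Bernstein–Shi type higher-derivative bounds and the lower injectivity radius bound implied by the pinching (so that the surfaces are non-collapsed), puts us in the setting of the compactness theorem of \cite{Baker2011}. Extracting a subsequence we obtain a smooth limit flow $F_\infty : \Sigma^2_\infty \times (-\infty,0] \to \mathbb{R}^4$.

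The crucial next step is to invoke Theorem \ref{eqn:tracelessEst} on the rescaled flows. Under the parabolic rescaling $|\Acirc|^2$ and $|K^\perp|$ scale by $\lambda_j^{-2}$ while $|H|^{2-\delta}$ scales by $\lambda_j^{-(2-\delta)}$, so
\[
|\Acirc_j|^2 + 2\gamma|K^\perp_j| \;\le\; c_0\,\lambda_j^{-\delta}\,|H_j|^{2-\delta}.
\]
On any compact subset of spacetime the uniform $|A_j|$ bound keeps $|H_j|$ bounded, so the right hand side tends to $0$ as $j \to \infty$. Passing to the limit therefore gives $|\Acirc_\infty|^2 \equiv 0$ and $|K^\perp_\infty| \equiv 0$, i.e. the limit flow is totally umbilic with flat normal bundle at every time $\tau \le 0$. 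Note that $|A_\infty|(p_\infty,0)=1$, so the limit is not a plane.

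Finally, totally umbilic codimension-two surfaces in $\mathbb{R}^4$ must by the Codazzi theorem (Schur's theorem in this codimension-two submanifold form, see \cite{Spivak1979}) have $H$ parallel in the normal bundle and pointwise $h_{ij} = \tfrac{1}{2} g_{ij} H$, which forces each time-slice $\Sigma^2_\infty(\tau)$ to be contained in an affine $3$-plane and to be a round two-sphere there. Since the flow solves MCF and the slice at $\tau = 0$ has $|A|(p_\infty,0)=1$, the limit is a shrinking round two-sphere. I expect the principal obstacle to be verifying the technical hypotheses required to apply the compactness theorem, in particular producing uniform higher-derivative estimates and the injectivity radius lower bound for the rescaled flows; the convergence of the limit to the umbilic locus is then an essentially direct consequence of the improvement estimate already established.
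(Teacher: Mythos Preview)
Your proposal is correct and follows essentially the same route as the paper: a parabolic blowup at curvature-maximising points, extraction of a limit via the compactness theorem of \cite{Baker2011}, passage to the limit in the rescaled pinching estimate of Theorem \ref{eqn:tracelessEst} to force $|\Acirc|^2_\infty=0$, and the Codazzi theorem to identify the limit as a round sphere lying in an affine $3$-plane. The only cosmetic differences are that the paper rescales by $\lambda_j=|H|(p_j,t_j)$ with $p_j$ a spatial maximum at time $t_j$, whereas you rescale by the spacetime maximum of $|A|$; the preserved pinching $|A|^2\le k|H|^2$ makes these choices interchangeable, and your choice has the minor advantage of giving $|A_j|\le 1$ on all of $(-\lambda_j^2 t_j,0]$ directly.
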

\begin{proof}
Pick any sequence of times $(t_j)_{j \in \mathbb{N}}$ such that $t_j \rightarrow T$ as $j \rightarrow \infty$. Proposition \ref{prop:pinching} implies that $\abs{ A }^2$ and $\abs{ H }^2$ have equivalent blow-up rates, so we can in fact rescale by $\abs{ H }^2$.  Since $\Sigma^2$ is assumed to be closed, we can pick a sequence of points $(p_j)_{ j \in \mathbb{N} }$ defined by
\begin{equation*}
	\abs{H}(p_j, t_j)  = \max_{ p \in \Sigma^2 }\abs{ H }( p, t_j ).
\end{equation*}
For convenience, set $\lambda_j := \abs{H}(p_j, t_j)$ and define a sequence of rescaled and translated flows by
\begin{equation*}
	F_j(q,s) = \lambda_j \big( F(q, t_j + s/\lambda_j^2) - F(p_j,t_j) \big).
\end{equation*}
It is easily checked this is a parabolic rescaling, and consequently for each $j$, the rescaled flow $F_j: \Sigma^2 \times [\lambda_j^2T, 0] \rightarrow \mathbb{R}^{4}$ is a solution of the mean curvature flow (in the time variable $s$). The second fundamental form of the rescaled flows is uniformly bounded above independent of $j$ and we can apply the compactness theorem for mean curvature flows (see \cite{Baker2011}) to obtain a smooth limit solution of the mean curvature flow $F_{\infty} : M_{\infty} \times (-\infty, 0] \rightarrow \mathbb{R}^{4}$. Furthermore, by construction of the sequence $F_j$, the limit solution satisfies $\abs{ H }^2_{\infty}(\cdot,0) = 1$ at some point. The estimate of Theorem \ref{eqn:tracelessEst} rescales as
\begin{equation*}
	|\Acirc|^2_j + 2\gamma |K^{\perp}|_j \leq c_0 \lambda_j^{ -\delta }\abs{ H }^2_j,
\end{equation*}
and upon sending $j \rightarrow \infty$ we find
\begin{equation*}\label{e: blow up 1}
	\abs{ \Acirc }^2_{ \infty } + 2\gamma |K^{\perp}|_{\infty}  = 0.
\end{equation*}
The previous line implies that $|\Acirc|_{\infty}^2 = 0$ and hence $F_{\infty}(M_{\infty},t)$ is totally umbilic.  By the Codazzi Theorem, $F_{\infty}(M_{\infty},t)$ is a plane or a $2$-sphere lying in a $3$-dimensional affine subspace of $\mathbb{R}^{4}$. We know the limit solution has positive mean curvature at some point and therefore must be a sphere.
\end{proof}
\begin{bibdiv}
\begin{biblist}

\bib{Andrews2010}{article}{
      author={Andrews, Ben},
      author={Baker, Charles},
       title={Mean curvature flow of pinched submanifolds to spheres},
        date={2010},
        ISSN={0022-040X},
     journal={J. Differential Geom.},
      volume={85},
      number={3},
       pages={357\ndash 395},
         url={http://projecteuclid.org/getRecord?id=euclid.jdg/1292940688},
      review={\MR{2739807 (2012a:53122)}},
}

\bib{Baker2011}{unpublished}{
      author={Baker, Charles},
       title={The mean curvature flow of submanifolds of high codimension},
        date={2011},
         url={Cite as:arXiv:1104.4409 [math.DG]},
        note={preprint},
}

\bib{ChernCarmoKobayashi1970}{incollection}{
      author={Chern, S.~S.},
      author={do~Carmo, M.},
      author={Kobayashi, S.},
       title={Minimal submanifolds of a sphere with second fundamental form of
  constant length},
        date={1970},
   booktitle={Functional {A}nalysis and {R}elated {F}ields ({P}roc. {C}onf. for
  {M}. {S}tone, {U}niv. {C}hicago, {C}hicago, {I}ll., 1968)},
   publisher={Springer, New York},
       pages={59\ndash 75},
      review={\MR{0273546}},
}

\bib{Hamilton1982}{article}{
      author={Hamilton, Richard~S.},
       title={Three-manifolds with positive {R}icci curvature},
        date={1982},
        ISSN={0022-040X},
     journal={J. Differential Geom.},
      volume={17},
      number={2},
       pages={255\ndash 306},
         url={http://projecteuclid.org/getRecord?id=euclid.jdg/1214436922},
      review={\MR{664497 (84a:53050)}},
}

\bib{Huisken1984}{article}{
      author={Huisken, Gerhard},
       title={Flow by mean curvature of convex surfaces into spheres},
        date={1984},
        ISSN={0022-040X},
     journal={J. Differential Geom.},
      volume={20},
      number={1},
       pages={237\ndash 266},
         url={http://projecteuclid.org/getRecord?id=euclid.jdg/1214438998},
      review={\MR{772132 (86j:53097)}},
}

\bib{Huisken1987}{article}{
      author={Huisken, Gerhard},
       title={Deforming hypersurfaces of the sphere by their mean curvature},
        date={1987},
        ISSN={0025-5874},
     journal={Math. Z.},
      volume={195},
      number={2},
       pages={205\ndash 219},
      review={\MR{MR892052 (88d:53058)}},
}

\bib{Simons1968}{article}{
      author={Simons, James},
       title={Minimal {Varieties} in {Riemannian} {Manifolds}},
        date={1968},
     journal={Annals of Mathematics},
      volume={88},
      number={1},
       pages={62\ndash 105},
}

\bib{Smyth1973}{article}{
      author={Smyth, Brian},
       title={Submanifolds of constant mean curvature},
        date={1973},
     journal={Mathematische Annalen},
      volume={205},
      number={4},
       pages={265\ndash 280},
}

\bib{Spivak1979}{book}{
      author={Spivak, Michael},
       title={A comprehensive introduction to differential geometry. {V}ol.
  {II}},
     edition={Second},
   publisher={Publish or Perish, Inc., Wilmington, Del.},
        date={1979},
        ISBN={0-914098-83-7},
      review={\MR{532831 (82g:53003b)}},
}

\bib{Topping2006}{book}{
      author={Topping, Peter},
       title={Lectures on the {R}icci flow},
      series={London Mathematical Society Lecture Note Series},
   publisher={Cambridge University Press, Cambridge},
        date={2006},
      volume={325},
        ISBN={978-0-521-68947-2; 0-521-68947-3},
         url={http://dx.doi.org/10.1017/CBO9780511721465},
      review={\MR{2265040 (2007h:53105)}},
}

\end{biblist}
\end{bibdiv}

\end{document}